\newcounter{minutes}
\newcounter{hours}
\newtheorem{definition}{Definition}
\newtheorem{lemma}{Lemma}
\newtheorem{theorem}{Theorem}
\newtheorem{corollary}{Corollary}
\newtheorem{remark}{Remark}
\newcommand{\real}{\operatorname{Re}}
\keywords{Univalent function, Bi-univalent function, Coefficient bounds, Chebyshev polynomial, Hankel determinant, Convolution}
\subjclass[2010]{30C45}
\begin{document}
\markboth{Halit Orhan, Evr{\.i}m Toklu and Ekrem Kad{\i}o{\u{g}}lu}{On the Chebyshev polynomials and second Hankel Determinant}
\title{Second Hankel determinant for certain subclasses of bi-univalent functions involving Chebyshev polynomials}
\title[Second Hankel determinant for...]{Second Hankel determinant for certain subclasses of bi-univalent functions involving Chebyshev polynomials}

\author[H. Orhan]{Hal{\.I}t Orhan}
\address{Department of Mathematics, Faculty of Science, Atat\"urk University, 25240 Erzurum, Turkey} \email{horhan@atauni.edu.tr}

\author[E. Toklu]{Evr{\.I}m Toklu}
\address{Department of Mathematics, Faculty of Science, A\u{g}r{\i} {\.I}brah{\.I}m \c{C}e\c{c}en University, 04100 A\u{g}r{\i}, Turkey} \email{etoklu@agri.edu.tr}

\author[E. Kad{\i}o{\u{g}}lu]{Ekrem Kad{\i}o{\u{g}}lu}
\address{Department of Mathematics, Faculty of Science, Atat\"urk University, 25240 Erzurum, Turkey} \email{ekrem@atauni.edu.tr}

\def\thefootnote{}
\footnotetext{ \texttt{File:~\jobname .tex,
		printed: \number\year-\number\month-\number\day,
		\thehours.\ifnum\theminutes<10{0}\fi\theminutes}
} \makeatletter\def\thefootnote{\@arabic\c@footnote}\makeatother

\maketitle
\begin{abstract}
In this paper our purpose is to find upper bound estimate for the second Hankel determinant $|a_{2}a_{4}-a_{3}^{2}|$ for functions defined by convolution belonging to the class $\mathcal{N}_{\sigma}^{\mu,\delta}(\lambda,t)$ by using Chebyshev polynomials.
\end{abstract}

\section{\bf Introduction}
Let $\mathcal{A}$ denote the class of functions $f(z)$ analytic in the open unit disk  $\mathbb{U}:=\left\lbrace z\in \mathbb{C}:|z|<1\right\rbrace $ and normalized by
\begin{equation}\label{equ1}
f(z)=z+\sum_{n\geq2}a_{n}z^{n}.
\end{equation} 

Because of the Koebe one-quarter theorem it is well known that every univalent function $f\in \mathcal{A}$ has an inverse $f^{-1}:f(\mathbb{U})\rightarrow \mathbb{U}$ satisfying
\begin{equation*}
f^{-1}\left( f(z)\right) =z, \text{ \  \ } (z\in \mathbb{U})
\end{equation*}
and
\begin{equation*}
f\left( f^{-1}(w)\right) =w,\text{ \ \ } (|w|<1/4).
\end{equation*}
Moreover, it is easy to check that the inverse function has the series expansion of the form
\begin{equation}\label{equ2}
f^{-1}(w)=w-a_{2}w^{2}+(2a_{2}^{2}-a_{3})w^{3}-(5a_{2}^{3}-5a_{2}a_{3}+a_{4})w^{4}+..., \text{ \  \ } w\in f(\mathbb{U}).
\end{equation}
A function $f\in \mathcal{A}$ is said to be bi-univalent in $\mathbb{U}$ if both $f$ and its inverse $g=f^{-1}$ are univalent in $\mathbb{U}$. Let $\sigma$ denote the class of bi-univalent functions in $\mathbb{U}$ given by (\ref{equ1}). For a brief history of functions in the class $\sigma$, and also various other properties of the bi-univalent function one can see recent works \cite{alr, coy, dco, omb, smg} and references therein.

Some of the prominent and well-examined subclasses of univalent functions class $\mathcal{S}$ are the class $\mathcal{S}^{\star}(\alpha)$ of starlike functions of order $\alpha$ in $\mathbb{U}$ and the class $\mathcal{K}(\alpha)$ of convex functions of order $\alpha$ in $\mathbb{U}$. Moreover, by means of the analytic descriptions, we have
$$\mathcal{S}^{\star}(\alpha):=\left\lbrace f: f\in \mathcal{A} \text{ \ and  \ } \real\left( \frac{zf^{\prime}(z)}{f(z)}\right)>\alpha;\text{ \ \ } z\in \mathbb{U};\text{ \ \ } 0\leq \alpha<1  \right\rbrace $$
and $$\mathcal{K}(\alpha):=\left\lbrace f: f\in \mathcal{A} \text{ \ and  \ } \real\left( 1+\frac{zf^{\prime\prime}(z)}{f^{\prime}(z)}\right)>\alpha;\text{ \ \ } z\in \mathbb{U};\text{ \ \ } 0\leq \alpha<1\right\rbrace .$$
For $0\leq \alpha<1$, a function $f\in \sigma$ is in the class $\mathcal{S}_{\sigma}^{\star}(\alpha)$ of bi-starlike function of order $\alpha$, or $\mathcal{K}_{\sigma}(\alpha)$ of bi-convex function of order $\alpha$ if both $f$ and its inverse $f^{-1}$ are, respectively, starlike or convex functions of order $\alpha$.

We say that $f\in\mathcal{A}$  is subordinate to the function  $g\in\mathcal{A}$ in  $\mathbb{U}$, written $f(z)\prec g(z)$, if there exists a Schwarz function $w$, analytic in  $\mathbb{U}$, with $w(0)=0$ and $|w(z)|<1$, and such that $f(z)=g(w(z))$.

For $f(z)$ given by (\ref{equ1}) and $\Theta(z)$ defined by
\begin{equation}\label{functionTheta}
\Theta(z)=z+\sum_{n\geq2}\theta_{n}z^{n}, \text{ \ \ } (\theta_{n}\geq0),
\end{equation}
the Hadamard product (or convolution) $(f*\Theta)(z)$ of the functions $f(z)$ and $\Theta(z)$ defined by
\begin{align}\label{Hadamard}
(f*\Theta)(z)=z+\sum_{n\geq2}a_{n}\theta_{n}z^{n}=(\Theta*f)(z).
\end{align}

Next, we consider the function
\begin{align}
f_{\delta}(z)&=\int_{0}^{z}(\frac{1+r}{1-r})^{\delta}\frac{1}{1-r^{2}}dr\nonumber\\&= z+\delta z^{2}+\frac{1}{3}(2\delta^{2}+1)z^{3}+... \label{functionnew} \\ \nonumber &=z+\sum_{n\geq2}b_{n}(\delta)z^{n}, \text{ \ \ }(\delta>0,\text{ \ \ } z\in \mathbb{U}). 
\end{align}
It is worth mentioning that if $\delta<1$, then $zf^{\prime}_{\delta}(z)$ is starlike with two slits. Moreover, we can see that since $zf^{\prime}_{1}(z)$ is the Koebe function, all the functions $f_{\delta}$ are univalent and convex in $\mathbb{U}$. For more detail about the function $f_{\delta}(z)$ one can refer to \cite{trimble}. If we put the function $f_{\delta}(z)$ defined by (\ref{functionnew}) in for the function $\Theta(z)$ given by (\ref{functionTheta}) in the equality (\ref{Hadamard}), we have
\begin{equation}\label{functionh}
h_{\delta}(z)=(f*f_{\delta})(z)=z+\sum_{n\geq2}a_{n}b_{n}(\delta)z^{n}=(f_{\delta}*f)(z).
\end{equation}

In 1976, Noonan and Thomas \cite{nt} defined the $q^{th}$ Hankel determinant of $f$ given by (\ref{equ1}) for integers $n\geq1$ and $q\geq1$ by

$$H_{q}(n)= \left| 
\begin{matrix}
a_{n} & a_{n+1} & \ldots &a_{n+q-1}\\
a_{n+1} & a_{n+2} & \ldots & a_{n+q-2}\\
\vdots &\vdots&\vdots&\vdots\\
a_{n+q-1} & a_{n+q-2} & \ldots & a_{n+2q-2}	
\end{matrix}
\right|, \text{ \ \ } (a_{1}=1). $$
This determinant has been investigated by several authors in the literature \cite{dienes,nt}. For instance, this determinant is so useful in showing that a function of bounded characteristic in $\mathbb{U}$, i.e. a function that is a ratio of two bounded analytic functions with its Laurent series around the origin having integral coefficients, is rational \cite{cantor}. Moreover, it is important to mention that the Hankel detrminants $H_{2}(1)=a_{3}-a_{2}^{2}$ and $H_{2}(2)=a_{2}a_{4}-a_{3}^{2}$ are well-known as Fekete-Szeg\"{o} and second Hankel determinant functionals, respectively. In 1969, the Fekete-Szeg\"{o} problem for the classes $\mathcal{S}^{\star}$ and $\mathcal{K}$ was investigated by Keogh and Merkes \cite{keme}. Recently, many authors have discussed upper bounds for the Hankel determinant of functions belonging to various subclasses of univalent functions \cite{alrs,dt,lrs} and references therein. Very recently, the upper bounds of $H_{2}(2)$ for the classes  $\mathcal{S}_{\sigma}^{\star}(\alpha)$ and $\mathcal{K}_{\sigma}(\alpha)$ were investigated by Deniz at al. \cite{dco}. Latter, the works were extended by Orhan et al.\cite{omy, otk} and Alt{\i}nkaya and Yal\c{c}{\i}n \cite{ay1, ay2}.

Chebyshev polynomials, which are used by us in this study, play an important role in many branches of mathematics, especially in numerical analysis (see \cite{chihara}). We know that there are several kinds of Chebyshev polynomials. In particular we shall introduce the first and second kind polynomials  $T_{n}(x)$ and  $U_{n}(x)$. For a brief history of the Chebyshev polynomials of first kind $T_{n}(x)$ and second kind $U_{n}(x)$  and their numerous uses in different applications one can refer \cite{doha,drs,ay0}.

The most remarkable kinds of the Chebyshev polynomials are the first and second kinds and in the case of real variable $x$ on $(-1,1)$ they are defined by
$$T_{n}(x)=\cos(n\arccos x) \text{ \and \ } U_{n}(x)=\frac{\sin[(n+1)\arccos x]}{\sin(\arccos x)}=\frac{\sin[(n+1)\arccos x ]}{\sqrt{1-x^2}}.$$
Now, we consider the function which is the generating function of a Chebyshev polynomial
$$G(t,z)=\frac{1}{1-2tz+z^2}, \text{ \ \ } t\in(\frac{1}{2},1), z\in \mathbb{U}.$$
It is well-known that if $t=\cos\theta, \text{ \ \ } t=(-\pi/3,\pi/3)$, then
\begin{align*}
G(t,z)&=1+\sum_{n\geq1}\frac{\sin(n+1)\theta}{\sin\theta}z^{n}\\&=1+2\cos\theta z+(3\cos^{2}\theta-\sin^{2}\theta)z^{2}+..., \text{ \ \ } (z\in\mathbb{U}).
\end{align*}
That is, in view of \cite{ww} we can write
\begin{equation}\label{equ3}
G(t,z)=1+U_{1}(t)z+U_{2}(t)z^2+U_{3}(t)z^3+..., \text{ \ \ } t\in(\frac{1}{2},1),z\in \mathbb{U},
\end{equation}
where, $U_{n}(t)$ stands for the second kind Chebyshev polynomials. From the definition of the second kind Chebyshev polynomials, we easily arrive at $U_{1}(t)=2t$. Also, it is well-known that we have the following reccurence relation
$$U_{n+1}(t)=2tU_{n}(t)-U_{n-2}(t)$$
for all $n\in \mathbb{N}$. From here, we can easily obtain
\begin{equation}
U_{1}(t)=2t,\text{ \ \ } U_{2}(t)=4t^{2}-1, \text{ \ \ } U_{3}(t)=8t^{3}-4t, \text{ \ \ } U_{4}(t)=16t^{4}-12t^{2}+1,..._{.}
\end{equation}
\begin{definition}
For $\lambda\geq1$, $\mu\geq0$, $\delta\geq1$ and $t\in(1/2,1]$, a function $h_{\delta}\in \sigma$ given by (\ref{functionh}) is said to be in class $\mathcal{N}_{\sigma}^{\mu, \delta}(\lambda, t)$ if the following subordinations hold for all $z,w\in \mathbb{U}$:
\begin{equation}\label{equ5}
(1-\lambda)(\frac{h_{\delta}(z)}{z})^{\mu}+\lambda h_{\delta}^{\prime}(z)(\frac{h_{\delta}(z)}{z})^{\mu-1}\prec G(z,t)
\end{equation}
and
\begin{equation}\label{equ6}
(1-\lambda)(\frac{k_{\delta}(w)}{w})^{\mu}+\lambda k_{\delta}^{\prime}(w)(\frac{k_{\delta}(w)}{w})^{\mu-1}\prec G(w,t),
\end{equation}
where the function $k_{\delta}=h_{\delta}^{-1}$ is defined by (\ref{equ2}).
\end{definition}

Obviously, for $\delta=1$, we get that $\mathcal{N}_{\sigma}^{\mu,1}(\lambda,t)=\mathcal{N}_{\sigma}^{\mu}(\lambda,t)$. It is important to mention that the class $\mathcal{N}_{\sigma}^{\mu}(\lambda,t)$ was introduced and investigated by Bulut et al. \cite{bma}. Also, they discussed initial coefficient estimates and Fkete-Szeg\"{o} bounds for the class $\mathcal{N}_{\sigma}^{\mu}(\lambda,t)$ and its subclasses given in the following remark.
\begin{remark}
\begin{enumerate}
\item[(i)] For $\delta=1$ and $\mu=1$, we get the class $\mathcal{N}_{\sigma}^{1,1}(\lambda,t)=\mathcal{B}_{\sigma}(\lambda,t)$ consist of functions $f\in \sigma$ satisfying the condition
$$(1-\lambda)\frac{f(z)}{z}+\lambda f^{\prime}(z)\prec G(z,t)$$
and
$$(1-\lambda)\frac{g(w)}{w}+\lambda g^{\prime}(w)\prec G(w,t)$$
where the function $g=f^{-1}$ is defined by (\ref{equ2}). This class was introduced and studied by Bulut et al \cite{bmb} (see also \cite{mustafa}).
\item[(ii)] For $\delta=1$ and $\lambda=1$, we obtain the class $\mathcal{N}_{\sigma}^{\mu,1}(1,t)=\mathcal{B}_{\sigma}^{\mu}(t)$ consist of bi-Bazilevi\u{c} functions:
$$f^{\prime}(z)\left(\frac{f(z)}{z} \right)^{\mu-1}\prec G(z,t) $$
and
$$g^{\prime}(w)\left(\frac{g(w)}{w} \right)^{\mu-1}\prec G(w,t),$$
where the function $g=f^{-1}$ is defined by (\ref{equ2}). This class was introduced and studied by Alt{\i}nkaya and Yal\c{c}{\i}n \cite{ay3}.
\item[(iii)] For $\delta=1$, $\mu=1$ and $\lambda=1$, we have the class $\mathcal{N}_{\sigma}^{1,1}(1,t)=\mathcal{B}_{\sigma}(t)$ consist of functions $f$ satisfying the condition
$$f^{\prime}(z)\prec G(z,t)$$
and
$$g^{\prime}(w)\prec G(w,t)$$
where the function $g=f^{-1}$ is defined by (\ref{equ2}).
\item[(iv)] For $\delta=1$, $\lambda=1$ and $\mu=0$, we have the class $\mathcal{N}_{\sigma}^{0,1}=\mathcal{S}_{\sigma}^{\star}(t)$ satisfying the condition
$$\frac{zf^{\prime}(z)}{f(z)}\prec G(z,t)$$
and
$$\frac{wg^{\prime}(z)}{g(w)}\prec G(w,t)$$
where the function $g=f^{-1}$ is defined by (\ref{equ2}).
\end{enumerate}
\end{remark}  

Let us take a look some lemmas which are very useful in building our main results.

Let $\mathcal{P}$ denote the class of analytic functions $p$ in $\mathbb{U}$ such that $p(0)=1 \text{ \ and \ } \real(p(z))>0$, $z\in\mathbb{U}$. Also, we know that this class is usually called the Carath\'{e}odory class.
\begin{lemma} (see \cite{pommerenke})\label{lem1}
If the function $p\in \mathcal{P}$ is given by the following series:
\begin{equation}\label{equ4}
p(z)=1+c_{1}z+c_{2}z^{2}+c_{3}z^{3}+...,
\end{equation}
then the sharp estimate given by
\begin{equation}\label{lem1ineq}
|c_{n}|\leq2 \text{ \ \ } (n=1,2,3,...)
\end{equation}
holds true.
\end{lemma}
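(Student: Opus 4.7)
The plan is to deduce the bound $|c_n|\le 2$ from the positivity of $\real(p)$ by combining a Cauchy-type coefficient identity with the mean value property for harmonic functions.

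First, write $p(z)=u(z)+iv(z)$ with $u=\real(p)$, so that $u$ is a positive harmonic function on $\mathbb{U}$ with $u(0)=1$. Inserting the Taylor expansion (\ref{equ4}) into the real part and separating the two frequency halves of each term $c_n r^n e^{in\theta}$, I would first establish the Fourier identity
\begin{equation*}
\frac{1}{2\pi}\int_0^{2\pi} u(re^{i\theta})\,e^{-in\theta}\,d\theta=\frac{c_n}{2}\,r^n, \qquad (n\ge 1,\; 0<r<1),
\end{equation*}
which is just the statement that the positive-frequency Fourier coefficients of $u$ reconstruct the holomorphic part of $p$.

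Next, because $u>0$ pointwise, I may drop the unimodular oscillating factor $e^{-in\theta}$ inside the integral at the cost of an inequality, and then invoke the mean value property for the harmonic function $u$ at the origin:
\begin{equation*}
|c_n|\,r^n=\frac{1}{\pi}\left|\int_0^{2\pi} u(re^{i\theta})\,e^{-in\theta}\,d\theta\right|\le\frac{1}{\pi}\int_0^{2\pi} u(re^{i\theta})\,d\theta=2\,u(0)=2.
\end{equation*}
This bound is valid for every $r\in(0,1)$, so letting $r\to 1^{-}$ yields the asserted estimate $|c_n|\le 2$.

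The argument is essentially soft: positivity plus the mean value property do all the work, and the only mildly technical ingredient is the Fourier identity above, which is routine. No serious obstacle is anticipated. Sharpness can be checked directly via $p(z)=(1+z)/(1-z)\in\mathcal{P}$, whose Taylor coefficients are $c_n=2$ for every $n\ge 1$, showing that the constant $2$ cannot be improved.
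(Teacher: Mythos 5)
Your argument is correct and complete: the Fourier identity $\frac{1}{2\pi}\int_0^{2\pi}\real p(re^{i\theta})e^{-in\theta}\,d\theta=\tfrac{1}{2}c_nr^n$ holds for $n\geq1$ because the conjugate series contributes only non-positive frequencies, and positivity of $\real p$ together with the mean value property then gives $|c_n|r^n\leq 2\real p(0)=2$, with sharpness witnessed by $(1+z)/(1-z)$. The paper does not prove this lemma at all --- it is quoted from Pommerenke's book --- and your proof is precisely the classical Carath\'eodory argument found there, so there is nothing to compare beyond noting that you have supplied the standard proof the paper omits.
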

\begin{lemma}\cite{gs}\label{lem2}
If the function $p\in \mathcal{P}$ is given by the series (\ref{equ4}), then
\begin{align*}
2c_{2}&=c_{1}^{2}+x(4-c_{1}^{2}),\\4c_{3}&=c_{1}^{3}+2(4-c_{1}^{2})c_{1}x-c_{1}(4-c_{1}^{2})x^{2}+2(4-c_{1}^{2})(1-|x|^{2})z
\end{align*}
for some $x$ and $z$ with $|x|\leq1$ and  $|y|\leq1$. 
\end{lemma}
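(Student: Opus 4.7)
The plan is to derive the two identities by passing from $p\in\mathcal{P}$ to the associated Schwarz function and using the Schur/Carath\'eodory parametrization. First I would recall that $p\in\mathcal{P}$ with expansion $p(z)=1+c_1z+c_2z^2+c_3z^3+\cdots$ if and only if $\omega(z)=(p(z)-1)/(p(z)+1)$ is a Schwarz function, so that $p(z)=(1+\omega(z))/(1-\omega(z))$. Writing $\omega(z)=w_1z+w_2z^2+w_3z^3+\cdots$, I would multiply out and match coefficients to obtain
\begin{equation*}
c_1=2w_1,\qquad c_2=2w_2+2w_1^2,\qquad c_3=2w_3+4w_1w_2+2w_1^3.
\end{equation*}
These three identities translate the problem entirely into the language of Schwarz coefficients.

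Next I would invoke the Schur algorithm, which guarantees that any Schwarz function admits a parametrization by a sequence of Schur parameters $\gamma_0,\gamma_1,\gamma_2,\ldots$ in the closed unit disk such that
\begin{equation*}
w_1=\gamma_0,\qquad w_2=(1-|\gamma_0|^2)\gamma_1,\qquad w_3=(1-|\gamma_0|^2)(1-|\gamma_1|^2)\gamma_2-(1-|\gamma_0|^2)\bar{\gamma_0}\gamma_1^{\,2}.
\end{equation*}
Setting $\gamma_0=c_1/2$ and $x=\gamma_1$, $z=\gamma_2$, one finds $(4-c_1^2)/4=1-|\gamma_0|^2$ in the normalization where $c_1$ is treated as a non-negative real (as is standard in Libera--Zlotkiewicz type applications, after a rotation), and substitution into the formula for $c_2$ yields $2c_2=c_1^2+x(4-c_1^2)$ with $|x|\leq1$. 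Plugging the same substitutions into the expression for $c_3$, then collecting the terms arising from $\gamma_0\gamma_1^{\,2}$ and from $(1-|\gamma_1|^2)\gamma_2$, produces exactly
\begin{equation*}
4c_3=c_1^{\,3}+2(4-c_1^2)c_1x-c_1(4-c_1^2)x^2+2(4-c_1^2)(1-|x|^2)z,
\end{equation*}
with $|z|\leq1$.

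The bulk of the work is purely algebraic verification that the Schur coefficients repackage into precisely the claimed polynomial expressions in $c_1,x,z$. The main obstacle, and the only place where care is really needed, is bookkeeping the sign and conjugation in the term $-(1-|\gamma_0|^2)\bar{\gamma_0}\gamma_1^{\,2}$; this is responsible for the $-c_1(4-c_1^2)x^2$ term in the identity for $4c_3$, and it is easy to misplace a factor here. An alternative route, which I would present if the Schur-algorithm expansion proves cumbersome, is to derive the first identity directly from the classical Carath\'eodory inequality $|2c_2-c_1^2|\leq 4-c_1^2$ by \emph{defining} $x:=(2c_2-c_1^2)/(4-c_1^2)$ when $c_1^2\neq 4$, and then to solve the relation for $c_3$ for $z$ and verify $|z|\leq 1$ via the Carath\'eodory--Toeplitz positivity of the $3\times 3$ Hermitian Toeplitz matrix built from $c_1,c_2,c_3$; the boundary case $c_1^2=4$ would be handled separately by a continuity argument.
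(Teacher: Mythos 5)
Your derivation is correct, but note that the paper itself supplies no proof of this lemma: it is quoted verbatim from Grenander--Szeg\H{o} \cite{gs}, where the content is the Carath\'eodory--Toeplitz positivity conditions; the explicit parametric formulas for $c_2$ and $c_3$ are the ones usually attributed to Libera and Zlotkiewicz. Your primary route via the Schur algorithm is a genuinely different (and self-contained) derivation: the coefficient identities $c_1=2w_1$, $c_2=2w_2+2w_1^2$, $c_3=2w_3+4w_1w_2+2w_1^3$ check out against $p=(1+\omega)/(1-\omega)=1+2\omega+2\omega^2+\cdots$, your Schur expansion of $w_1,w_2,w_3$ in terms of $\gamma_0,\gamma_1,\gamma_2$ is the correct one (I verified the $-\,(1-|\gamma_0|^2)\bar{\gamma}_0\gamma_1^2$ term, which indeed produces $-c_1(4-c_1^2)x^2$), and the substitution $\gamma_0=c_1/2$, $x=\gamma_1$, $z=\gamma_2$ reproduces both displayed identities exactly. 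Two small points you should make explicit: the Schur algorithm is applied to $\omega(z)/z$ (which lies in the Schur class by the Schwarz lemma), not to $\omega$ itself; and the replacement of $4-|c_1|^2$ by $4-c_1^2$ requires the rotation normalization $c_1\in[0,2]$, which you correctly flag and which the paper also invokes later in the proof of Theorem 1 (the degenerate case $c_1=2$ is harmless because every $\gamma_1,\gamma_2$-dependent term carries the factor $1-|\gamma_0|^2=0$). Your fallback argument --- defining $x:=(2c_2-c_1^2)/(4-c_1^2)$ from the Carath\'eodory inequality and extracting $z$ from the $3\times 3$ Toeplitz determinant --- is precisely the classical proof underlying the citation, so either route is acceptable; the Schur-parameter version has the advantage of producing $x$ and $z$ constructively and making the bounds $|x|\le 1$, $|z|\le 1$ automatic rather than requiring a separate positivity verification. (Incidentally, the lemma as printed in the paper has a typo: the condition should read $|z|\le 1$, not $|y|\le 1$.)
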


In the present investigation, we seek upper bound for the second Hankel determinant for functions  $h_{\delta}$ belongs to the class  $\mathcal{N}_{\sigma}^{\mu, \delta}(\lambda, t)$ by making use of the Chebyshev polynomials expansions and the Hadamard product. Also, we give some remarkable consequences related to the class $\mathcal{N}_{\sigma}^{\mu, \delta}(\lambda, t)$. 
\section{\bf Main Results}
\setcounter{equation}{0}
\begin{theorem}\label{theo1}
Let $h_{\delta}\in\sigma$ of the form (\ref{functionh}) be in $\mathcal{N}_{\sigma}^{\mu, \delta}(\lambda;t)$. Then
\begin{align*}
\left|a_{2}a_{4}-a_{3} ^{2}\right| &\leq
\begin{cases}
\varphi(2^{-},t)      ,&\chi_{1}\geq0 \text{ \and\ } \chi_{2}\geq0\\
\frac{36t^{2}}{(2\delta^{2}+1)^{2}(2\lambda+\mu)^{2}} ,& \chi_{1}\leq0 \text{ \and\ } \chi_{2}\leq0\\
\max\left\lbrace\frac{36t^{2}}{(2\delta^{2}+1)^{2}(2\lambda+\mu)^{2}},\text{ \ \ }\varphi(2^{-},t) \right\rbrace ,&\chi_{1}>0 \text{ \and\ } \chi_{2}<0\\
\max\left\lbrace\varphi(c_{0},t), \varphi(2^{-},t) \right\rbrace  ,&\chi_{1}<0 \text{ \and\ } \chi_{2}>0
\end{cases},
\end{align*}
where
$$\varphi(2^{-},t)=\frac{9U_{1}^{2}(t)}{(2\delta^{2}+1)^{2}(2\lambda+\mu)^{2}}+\frac{\chi_{1}+9\chi_{2}}{6\delta(\delta^{3}+2\delta)(2\delta^{2}+1)^{2}(3\lambda+\mu)(2\lambda+\mu)^{2}(\lambda+\mu)^{4}},$$
$$\varphi(c_{0},t)=\frac{9U_{1}^{2}(t)}{(2\delta^{2}+1)^{2}(2\lambda+\mu)^{2}}-\frac{27\chi_{2}^{2}}{8\chi_{1}\delta(\delta^{3}+2\delta)(2\delta^{2}+1)^{2}(3\lambda+\mu)(2\lambda+\mu)^{2}(\lambda+\mu)^{4}}, \text{ \ \ \ } c_{0}=\sqrt{\frac{-18\chi_{2}}{\chi_{1}}}$$
and
\begin{align*}
\chi_{1}&=(2\lambda+\mu)^{2}U_{1}(t)\left|\Omega_{\lambda,\mu,\delta}(t) \right|+18(\lambda+\mu)^{3}\bigg( 3\delta(\delta^{3}+2\delta)(\lambda+\mu)(3\lambda+\mu)-(2\delta^{2}+1)^{2}(2\lambda+\mu)^{2}\bigg)U_{1}^{2}(t)
\\&-9(\lambda+\mu)^{2}(2\lambda+\mu)U_{1}(t)\bigg((3\lambda+\mu)(8\delta^{4}-4\delta^{2}+5)U_{1}^{2}(t)+4(2\delta^{2}+1)^{2}(2\lambda+\mu)(\lambda+\mu)U_{2}(t)\bigg),
\\\chi_{2}&=\bigg[(2\lambda+\mu)(3\lambda+\mu)\left(8\delta^{4}-4\delta^{2}+5\right)U_{1}^{3}(t)+4(2\delta^{2}+1)^{2}(\lambda+\mu)(2\lambda+\mu)^{2}U_{1}(t)U_{2}(t)
\\&+(\lambda+\mu)U_{1}^{2}(t)\big(2(2\delta^{2}+1)^{2}(2\lambda+\mu)^{2}-12\delta(\delta^{3}+2\delta)(\lambda+\mu)(3\lambda+\mu)\big) \bigg](\lambda+\mu)^{2} , 
\end{align*}
where, $\Omega_{\lambda,\mu,\delta}(t)=18(2\delta^{2}+1)^{2}(\lambda+\mu)^{3}U_{3}(t)-U_{1}^{3}(t)(3\lambda+\mu)\bigg(3(2\delta^{2}+1)^{2}(\mu^{2}+3\mu-4)+54\delta(\delta^{3}+2\delta)\bigg)$.
\end{theorem}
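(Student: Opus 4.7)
The plan is to convert the two subordinations in Definition~1 into coefficient identities, solve them for $a_2,a_3,a_4$ in terms of the coefficients of two Carath\'eodory functions, substitute into $a_2a_4-a_3^2$, and then maximize the resulting function using Lemmas~\ref{lem1} and~\ref{lem2}. Introduce Schwarz functions $u,v$ and write $p(z)=(1+u(z))/(1-u(z))=1+c_1z+c_2z^2+\cdots\in\mathcal{P}$ and $q(w)=(1+v(w))/(1-v(w))=1+d_1w+d_2w^2+\cdots\in\mathcal{P}$, so that
\[
(1-\lambda)\Bigl(\tfrac{h_{\delta}(z)}{z}\Bigr)^{\mu}+\lambda h_{\delta}'(z)\Bigl(\tfrac{h_{\delta}(z)}{z}\Bigr)^{\mu-1}=G\!\bigl(u(z),t\bigr),
\]
and similarly for $k_\delta$. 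Setting $A_n:=a_nb_n(\delta)$ from (\ref{functionh}), I would expand the left hand sides through order $z^4$ (and $w^4$), expand the right hand sides using (\ref{equ3}) and the standard formulas $u(z)=\tfrac12c_1z+\tfrac12(c_2-\tfrac{c_1^2}{2})z^2+\cdots$, and match the first four coefficients. The inverse $k_\delta=h_\delta^{-1}$ is expressed through (\ref{equ2}), so its coefficients become explicit polynomials in $A_2,A_3,A_4$.

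From the $z$-coefficients I expect to obtain $c_1=-d_1$ together with a linear expression for $A_2$ in $c_1$. The $z^2$-coefficients combined with this identity give $A_3$ in terms of $c_1,c_2,d_2$; the $z^3$-coefficients give $A_4$ in terms of $c_1,c_2,c_3,d_2,d_3$. Translating back via $a_n=A_n/b_n(\delta)$ and assembling $a_2a_4-a_3^2$, the Fekete--Szeg\H{o}-type cross term in $c_1^2$ drops out and what remains can be written as a combination of $(c_2+d_2)$, $(c_3-d_3)$ and lower-order pieces, with the denominators $(2\delta^2+1)^2(2\lambda+\mu)^2(3\lambda+\mu)(\lambda+\mu)^4$ visible in the statement of the theorem.

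Next I would apply Lemma~\ref{lem2} to each of $p,q$, introducing parameters $x,z,y,w$ with $|x|,|y|,|z|,|w|\le 1$, take absolute values, and use $|1-|x|^2|,|1-|y|^2|\le 1$ to eliminate the $z,w$ free parameters. By the symmetry $x\leftrightarrow y$ one may set $|x|=|y|=\xi\in[0,1]$ and, writing $c:=c_1\in[0,2]$ by Lemma~\ref{lem1}, the upper bound becomes a polynomial $\Psi(c,\xi)$ of degree two in $\xi^2$. Maximizing in $\xi$ first produces the candidate values $\xi=1$ and $\xi=0$; substituting $\xi=1$ yields a function $\varphi(c,t)$ whose coefficients in $c^2$ and in the constant term turn out to be exactly $\chi_1$ and $\chi_2$ up to the listed normalizations. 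A routine derivative analysis of $\varphi(\cdot,t)$ on $[0,2]$ then produces the four cases: both signs of $\chi_1,\chi_2$ positive gives the maximum at $c\to 2^-$; both negative gives it at $c=0$ (which is the $\xi=0$ evaluation $36t^2/[(2\delta^2+1)^2(2\lambda+\mu)^2]$); the mixed sign cases are resolved by comparing the endpoint values or by the interior critical point $c_0=\sqrt{-18\chi_2/\chi_1}$.

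The main obstacle is bookkeeping rather than conceptual difficulty. The expansions of $(h_\delta/z)^{\mu}$ and $h_\delta'(h_\delta/z)^{\mu-1}$ through $z^4$, together with the analogous expansions for the inverse and the composition with $u,v$, produce very long polynomials in $\mu,\lambda,\delta$; inverting the resulting triangular system to isolate $A_4$ and then simplifying $a_2a_4-a_3^2$ so that the coefficients match the closed forms of $\chi_1,\chi_2$ and $\Omega_{\lambda,\mu,\delta}(t)$ is the delicate part. In particular, the appearance of $(8\delta^4-4\delta^2+5)$ and $\delta(\delta^3+2\delta)$ in $\chi_1,\chi_2$ signals that $b_4(\delta)$ must be computed explicitly from (\ref{functionnew}) and inserted carefully; similarly, $U_3(t)$ only enters through $\Omega_{\lambda,\mu,\delta}(t)$, which is natural since the $z^3$-subordination coefficient is the only place the third Chebyshev polynomial can enter in a degree-$4$ truncation.
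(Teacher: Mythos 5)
Your proposal follows essentially the same route as the paper: the same Carath\'eodory/Schwarz-function setup, the same coefficient matching leading to $c_1=-d_1$ and explicit formulas for $a_2,a_3,a_4$, the same use of Lemma~\ref{lem2} to parametrize $c_2\pm d_2$ and $c_3-d_3$, and the same endpoint/critical-point analysis of $\varphi(c,t)$ yielding the four sign cases for $\chi_1,\chi_2$. The only substantive difference is that you set $|x|=|y|$ ``by symmetry,'' whereas the paper maximizes $\psi(\gamma_1,\gamma_2)$ over the full square $[0,1]^2$ (ruling out interior maxima and checking the boundary) --- your shortcut does need the extra observation that $\kappa_3\le 0$ forces the maximum onto the diagonal, and note also that the value $36t^2/[(2\delta^2+1)^2(2\lambda+\mu)^2]$ arises as $\varphi(0^+,t)=4\kappa_4|_{c=0}$, not as a $\xi=0$ evaluation.
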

\begin{proof}
Let $h_{\delta}\in \mathcal{N}_{\sigma}^{\mu,\delta}(\lambda,t)$. Then, we have
\begin{equation}\label{equ8}
(1-\lambda)(\frac{h_{\delta}(z)}{z})^{\mu}+\lambda h_{\delta}^{\prime}(z)(\frac{h_{\delta}(z)}{z})^{\mu-1}=G(t,u(z))
\end{equation}
and
\begin{equation}\label{equ9}
(1-\lambda)(\frac{k_{\delta}(w)}{w})^{\mu}+\lambda k_{\delta}^{\prime}(w)(\frac{k_{\delta}(w)}{w})^{\mu-1}=G(t,v(w))
\end{equation}
where $p_{1},p_{2}\in \mathcal{P}$ and defined by
\begin{equation}\label{equ10}
p_{1}(z)=\frac{1+u(z)}{1-u(z)}=1+c_{1}z+c_{2}z^{2}+c_{3}z^{3}+...
\end{equation}
and
\begin{equation}\label{equ11}
p_{2}(w)=\frac{1+v(w)}{1-v(w)}=1+d_{1}w+d_{2}w^{2}+d_{3}w^{3}+..._{.}
\end{equation}
It follows from (\ref{equ10}) and (\ref{equ11}) that
\begin{equation}\label{equ12}
u(z)=\frac{p_{1}(z)-1}{p_{1}(z)+1}=\frac{1}{2}\left[c_{1}z+\left(c_{2}-\frac{c_{1}^{2}}{2}\right)z^{2}+\left(c_{3}-c_{1}c_{2}+\frac{c_{1}^{3}}{4} \right)z^{3}+...   \right] 
\end{equation}
and
\begin{equation}\label{equ13}
v(w)=\frac{p_{2}(w)-1}{p_{2}(w)+1}=\frac{1}{2}\left[d_{1}w+\left(d_{2}-\frac{d_{1}^{2}}{2}\right)w^{2}+\left(d_{3}-d_{1}d_{2}+\frac{d_{1}^{3}}{4} \right)w^{3}+...   \right].
\end{equation}
Using (\ref{equ12}) together with (\ref{equ13}), taking $G(z,t)$ as given in (\ref{equ3}), we get that
\begin{align}\label{equ14}
G(t,u(z))=1&+\frac{U_{1}(t)}{2}c_{1}z+\left[\frac{U_{1}(t)}{2}\left(c_{2}-\frac{c_{1}^{2}}{2} \right)+\frac{U_{2}(t)}{4}c_{1}^{2}  \right]z^{2}
  \\\nonumber
  &+\left[\frac{U_{1}(t)}{2}\left(c_{3}-c_{1}c_{2}+\frac{c_{1}^{3}}{4} \right)+\frac{U_{2}(t)}{2}c_{1}\left( c_{2}-\frac{c_{1}^{2}}{2}\right)+\frac{U_{3}(t)}{8}c_{1}^{3} \right]z^{3}+...
\end{align}
and
\begin{align}\label{equ15}
G(t,v(w))=1&+\frac{U_{1}(t)}{2}d_{1}w+\left[\frac{U_{1}(t)}{2}\left(d_{2}-\frac{d_{1}^{2}}{2} \right)+\frac{U_{2}(t)}{4}d_{1}^{2}  \right]w^{2}\\ \nonumber&+\left[\frac{U_{1}(t)}{2}\left(d_{3}-d_{1}d_{2}+\frac{d_{1}^{3}}{4} \right)+\frac{U_{2}(t)}{2}d_{1}\left( d_{2}-\frac{d_{1}^{2}}{2}\right)+\frac{U_{3}(t)}{8}d_{1}^{3} \right]w^{3}+..._{.}  
\end{align}
By considering (\ref{equ8}), (\ref{equ14}) and (\ref{equ9}), (\ref{equ15}), when some elementary calculations are done, we get that
\begin{align}
(\lambda+\mu)a_{2}b_{2}(\delta)&=\frac{U_{1}(t)}{2}c_{1}\label{eq0},
\\(2\lambda+\mu)\left[a_{3}b_{3}(\delta)+(\mu-1)\frac{a_{2}^{2}b_{2}^{2}(\delta)}{2} \right]\label{eq1} &=\frac{U_{1}(t)}{2}(c_{2}-\frac{c_{1}^{2}}{2})+\frac{U_{2}(t)}{4}c_{1}^{2},
\\(3\lambda+\mu)\left[ a_{4}b_{4}(\delta)+(\mu-1)a_{2}a_{3}b_{2}(\delta)b_{3}(\delta)+(\mu-1)(\mu-2)\frac{a_{2}^{3}b_{2}^{3}(\delta)}{6}\right]&=\frac{U_{1}(t)}{2}\left( c_{3}-c_{1}c_{2}+\frac{c_{1}^{3}}{4}\right) \nonumber\\  &+\frac{U_{2}(t)}{2}c_{1}\left(c_{2}-\frac{c_{1}^{2}}{2} \right)+\frac{U_{3}(t)}{8}c_{1}^{3} \label{eq2}
\end{align}
and
\begin{align}
-(\lambda+\mu)a_{2}b_{2}(\delta)&=\frac{U_{1}(t)}{2}d_{1}\label{eq3},
\\(2\lambda+\mu)\left[(\mu+3)\frac{a_{2}^{2}b_{2}^{2}(\delta)}{2}-a_{3}b_{3}(\delta) \right]\label{eq4} &=\frac{U_{1}(t)}{2}(d_{2}-\frac{d_{1}^{2}}{2})+\frac{U_{2}(t)}{4}d_{1}^{2},
\\(3\lambda+\mu)\left[ (\mu+4)a_{2}a_{3}b_{2}(\delta)b_{3}(\delta)-(\mu+4)(\mu+5)\frac{a_{2}^{3}b_{2}^{3}(\delta)}{6}-a_{4}b_{4}(\delta)\right]&=\frac{U_{1}(t)}{2}\left( d_{3}-d_{1}d_{2}+\frac{d_{1}^{3}}{4}\right) \nonumber\\  &+\frac{U_{2}(t)}{2}d_{1}\left(d_{2}-\frac{d_{1}^{2}}{2} \right)+\frac{U_{3}(t)}{8}d_{1}^{3} \label{eq5}.
\end{align}
Using (\ref{eq0}) along with (\ref{eq3}), we find that
\begin{equation}\label{equ16}
c_{1}=-d_{1}
\end{equation}
and
\begin{equation}\label{equ17}
a_{2}=\frac{U_{1}(t)}{2\delta(\lambda+\mu)}c_{1}.
\end{equation}

Now, from (\ref{eq1}), (\ref{eq4}) and (\ref{equ17}), we obtain that
\begin{equation}\label{equ18}
a_{3}=\frac{3}{2\delta^{2}+1}\left[\frac{U_{1}^{2}(t)}{4(\lambda+\mu)^{2}}c_{1}^{2}+\frac{U_{1}(t)}{4(2\lambda+\mu)}(c_{2}-d_{2}) \right].
\end{equation}

Also, subtracting (\ref{eq5}) from (\ref{eq2}) and using (\ref{equ17}) together with (\ref{equ18}) we get that
\begin{align}
a_{4}=&\frac{3}{\delta^{3}+2\delta}\bigg[ \left(  \frac{U_{1}(t)-2U_{2}(t)+U_{3}(t)}{8(3\lambda+\mu)}-\frac{(\mu^{2}+3\mu-4)U_{1}^{3}(t)}{48(\lambda+\mu)^{3}}\right) c_{1}^{3}+\frac{5U_{1}^{2}(t)}{16(\lambda+\mu)(2\lambda+\mu)}c_{1}(c_{2}-d_{2})\nonumber\\&+\frac{U_{2}(t)-U_{1}(t)}{4(3\lambda+\mu)}c_{1}(c_{2}+d_{2})+\frac{U_{1}(t)}{4(3\lambda+\mu)}(c_{3}-d_{3}) \bigg]. \label{eq6}
\end{align}
Thus, we can easily determine that
\begin{align}
\left| a_{2}a_{4}-a_{3}^2\right|&=\bigg|\frac{U_{1}(t)\Delta_{\lambda,\mu,\delta}(t) }{96\delta(\delta^{3}+2\delta)(2\delta^{2}+1)^{2}(3\lambda+\mu)(\lambda+\mu)^{4}}c_{1}^{4} +\frac{U_{1}^{3}(t)\left[15(2\delta^{2}+1)^{2}-36\delta(\delta^{3}+2\delta) \right] }{32\delta(\delta^{3}+2\delta)(2\delta^{2}+1)^{2}(\lambda+\mu)^{2}(2\lambda+\mu)}c_{1}^{2}(c_{2}-d_{2})\nonumber\\&+\frac{3U_{1}(t)\left[U_{2}(t)-U_{1}(t) \right] }{8\delta(\delta^{3}+2\delta)(\lambda+\mu)(3\lambda+\mu)}c_{1}^{2}(c_{2}+d_{2}) +\frac{3U_{1}^{2}(t)}{8\delta(\delta^{3}+2\delta)(\lambda+\mu)(3\lambda+\mu)}c_{1}(c_{3}-d_{3})\nonumber\\&-\frac{9U_{1}^{2}(t)}{16(2\delta^{2}+1)^{2}(2\lambda+\mu)^{2}}(c_{2}-d_{2})^{2}\bigg|,\label{eq7}
\end{align}
where $$\Delta_{\lambda,\mu,\delta}(t)=18(2\delta^{2}+1)^{2}(\lambda+\mu)^{3}((U_{1}(t)-2U_{2}(t)+U_{3}(t))-U_{1}^{3}(t)(3\lambda+\mu)\big(3(2\delta^{2}+1)^{2}(\mu^{2}+3\mu-4)+54\delta(\delta^{3}+2\delta)\big).$$
In view of Lemma \ref{lem2} and (\ref{equ16}), we write
\begin{align}
c_{2}-d_{2}&=\frac{4-c_{1}^{2}}{2}(x-y)\label{eq8},\\c_{2}+d_{2}&=c_{1}^{2}+\frac{4-c_{1}^{2}}{2}(x+y)\label{eq9},\\c_{3}-d_{3}&=\frac{c_{1}^{3}}{2}+\frac{(4-c_{1}^{2})c_{1}}{2}(x+y)-\frac{(4-c_{1}^{2})c_{1}}{4}(x^{2}+y^{2})+\frac{4-c_{1}^{2}}{2}\left[ (1-|x|^{2})z-(1-|y|^{2})w\right]\label{eq10}, 
\end{align}
for some $x$, $y$ and $z$, $w$ with $|x|\leq1$, $|y|\leq1$, $|z|\leq1$ and $|w|\leq1$. Next, we will need to plug the last three equations given by (\ref{eq8}), (\ref{eq9}) and (\ref{eq10}) into the Hankel functional given by the equation (\ref{eq7}). And also, with the help of an application of  the triangle inequality, we have
\begin{align*}
\left| a_{2}a_{4}-a_{3}^{2}\right|&\leq \frac{U_{1}(t)\left|\Omega_{\lambda,\mu,\delta}(t) \right| }{96\delta(\delta^{3}+2\delta)(2\delta^{2}+1)^{2}(3\lambda+\mu)(\lambda+\mu)^{4}}c_{1}^{4}+\frac{3U_{1}^{2}(t)c_{1}(4-c_{1}^{2})}{8\delta(\delta^{3}+2\delta)(\lambda+\mu)(3\lambda+\mu)}\nonumber \\
 &+\bigg[\frac{\left(15(2\delta^{2}+1)^{2}-36\delta(\delta^{3}+2\delta) \right)U_{1}^{3} }{64\delta(\delta^{3}+2\delta)(2\delta^{2}+1)^{2}(\lambda+\mu)^{2}(2\lambda+\mu)}+\frac{3U_{1}(t) U_{2}(t) }{16\delta(\delta^{3}+2\delta)(\lambda+\mu)(3\lambda+\mu)}\bigg]c_{1}^{2}(4-c_{1}^{2})(|x|+|y|)\nonumber\\&+\bigg[\frac{3U_{1}^{2}(t)c_{1}^{2}(4-c_{1}^{2})}{32\delta(\delta^{3}+2\delta)(\lambda+\mu)(3\lambda+\mu)}-\frac{3U_{1}^{2}(t)c_{1}(4-c_{1}^{2})}{16\delta(\delta^{3}+2\delta)(\lambda+\mu)(3\lambda+\mu)}\bigg](|x|^{2}+|y|^{2})\nonumber \\&+\frac{9U_{1}^{2}(t)(4-c_{1}^{2})^{2}}{64(2\delta^{2}+1)^{2}(2\lambda+\mu)^{2}}(|x|+|y|)^{2},
\end{align*}
where
$$\Omega_{\lambda,\mu,\delta}(t)=18(2\delta^{2}+1)^{2}(\lambda+\mu)^{3}U_{3}(t)-U_{1}^{3}(t)(3\lambda+\mu)\bigg(3(2\delta^{2}+1)^{2}(\mu^{2}+3\mu-4)+54\delta(\delta^{3}+2\delta)\bigg).$$
Since the class $\mathcal{P}$ is invaryant under the rotations, by (\ref{lem1ineq}) we may suppose without loss of generality that $c_{1}:=c\in[0,2]$. Therefore, for $\eta=|x|\leq1$ and $\zeta=|y|\leq1$, we obtain
$$\left|a_{2}a_{4}-a_{3}^{2}\right|\leq\kappa_{1}+\kappa_{2}(\gamma_{1}+\gamma_{2})+\kappa_{3}(\gamma_{1}^{2}+\gamma_{2}^{2})+\kappa_{4}(\gamma_{1}+\gamma_{2})^{2}=\psi(\gamma_{1},\gamma_{2}) $$
where
\begin{align}
\kappa_{1}&=\frac{U_{1}(t)\left|\Omega_{\lambda,\mu,\delta}(t) \right| }{96\delta(\delta^{3}+2\delta)(2\delta^{2}+1)^{2}(3\lambda+\mu)(\lambda+\mu)^{4}}c^{4}+\frac{3U_{1}^{2}(t)c(4-c^{2})}{8\delta(\delta^{3}+2\delta)(\lambda+\mu)(3\lambda+\mu)}\geq 0 \nonumber\\ \kappa_{2}&=\bigg[\frac{\left(15(2\delta^{2}+1)^{2}-36\delta(\delta^{3}+2\delta) \right)U_{1}^{3} }{64\delta(\delta^{3}+2\delta)(2\delta^{2}+1)^{2}(\lambda+\mu)^{2}(2\lambda+\mu)}+\frac{3U_{1}(t)U_{2}(t) }{16\delta(\delta^{3}+2\delta)(\lambda+\mu)(3\lambda+\mu)}\bigg]c^{2}(4-c^{2})\geq 0 \nonumber
\\\kappa_{3}&=\frac{3U_{1}^{2}(t)c(c-2)(4-c^{2})}{32\delta(\delta^{3}+2\delta)(\lambda+\mu)(3\lambda+\mu)}\leq 0 \nonumber \\ \nonumber  \kappa_{4}&=\frac{9U_{1}^{2}(t)(4-c^{2})^{2}}{64(2\delta^{2}+1)^{2}(2\lambda+\mu)^{2}}\geq 0, \text{ \ \ } \frac{1}{2}<t<1.
\end{align}
Now, let us consider the closed square $\mathbb{S}=\left\lbrace (\gamma_{1},\gamma_{2}):0\leq \gamma_{1} \leq 1, 0\leq \gamma_{2}\leq1\right\rbrace $. In that case, all that we need to do is to maximize the function $\psi(\gamma_{1},\gamma_{2})$ in the closed square $\mathbb{S}$ for $c\in[0,2]$. Since $\kappa_{3}\leq0$ and $\kappa_{3}+2\kappa_{4}\geq0$ for all $t\in(\frac{1}{2},1)$ and $c\in(0,2)$, we conclude that
$$\psi_{\gamma_{1}\gamma_{1}}\psi_{\gamma_{2}\gamma_{2}}-(\psi_{\gamma_{1}\gamma_{2}})^{2}<0, \text{ \ for all \ } \gamma_{1},\gamma_{2}\in\mathbb{S}.$$
Thus, the function $\psi$ cannot have a local maximum in the interior of the square $\mathbb{S}$. Now, we investigate the maximum of $\psi$ on the boundary of the square $\mathbb{S}$.

For $\gamma_{1}=0$ and $0\leq\gamma_{2}\leq1$ (similarly $\gamma_{2}=0$ and $0\leq\gamma_{1}\leq1$) we get
$$\psi(0,\gamma_{2})=\phi(\gamma_{2})=\kappa_{1}+\kappa_{2}\gamma_{2}+(\kappa_{3}+\kappa_{4})\gamma_{2}^{2}.$$

Next, we are going to be dealing with the following two cases separately.

\begin{description}
\item[Case 1] Let $\kappa_{3}+\kappa_{4}\geq 0$. In this case for $0<\gamma_{2}<1$, any fixed $c$ with $0\leq c <2$ and for all $t$ with $\frac{1}{2}<t<1$, it is clear that $\phi^{\prime}(\gamma_{2})=2(\kappa_{3}+\kappa_{4})\gamma_{2}+\kappa_{2}>0$, that is, $\phi(\gamma_{2})$ is an increasing function. Hence, for fixed $c\in[0,2)$ and $t\in(1/2,1)$, the function $\phi(\gamma_{2})$ attains a maximum at $\gamma_{2}=1$ and
$$\max\left( \phi(\gamma_{2})\right) =\phi(1)=\kappa_{1}+\kappa_{2}+\kappa_{3}+\kappa_{4}.$$

\item[Case 2] Let $\kappa_{3}+\kappa_{4}<0$. Since $\kappa_{2}+2(\kappa_{3}+\kappa_{4})\geq0$ for $0<\gamma_{2}<1$, any fixed $c$ with $0\leq c<2$ and for all $t\in(1/2,1)$, it is clear that $\kappa_{2}+2(\kappa_{3}+\kappa_{4})<2(\kappa_{3}+\kappa_{4})\gamma_{2}+\kappa_{2}<\kappa_{2}$ and so $\phi^{\prime}(\gamma_{2})>0$. Hence, for fixed $c\in[0,2)$ and $t\in(\frac{1}{2},1)$, the function $\phi(\gamma_{2})$ attains a maximum at $\gamma_{2}=1$.
\end{description}
For $\gamma_{1}=1$ and $0\leq\gamma_{2}\leq1$ (similarly $\gamma_{2}=1$ and $0\leq\gamma_{1}\leq1$), we get
$$\psi(1,\gamma_{2})=\Psi(\gamma_{2})=(\kappa_{3}+\kappa_{4})\gamma_{2}^{2}+(\kappa_{2}+2\kappa_{4})\gamma_{2}+\kappa_{1}+\kappa_{2}+\kappa_{3}+\kappa_{4}.$$
Thus, from the above cases of $\kappa_{3}+\kappa_{4}$, we get that
$$\max\Psi(\gamma_{2})=\Psi(1)=\kappa_{1}+2\kappa_{2}+2\kappa_{3}+4\kappa_{4}.$$
Since $\phi(1)\leq\Psi(1)$ for $c\in(0,2)$ and $t\in(\frac{1}{2},1)$, we obtain
$$\max(\psi(\gamma_{1},\gamma_{2}))=\psi(1,1)$$
on the boundary of the square $\mathbb{S}$. Thus, the maximum of $\psi$ occurs at $\gamma_{1}=1$ and $\gamma_{2}=1$ in the closed square $\mathbb{S}$.

Let a function $\varphi:[0,2]\rightarrow \mathbb{R}$ defined by
\begin{equation}\label{equ19}
\varphi(c,t)=\max\left(\psi(\gamma_{1},\gamma_{2}) \right)=\psi(1,1)=\kappa_{1}+2\kappa_{2}+2\kappa_{3}+4\kappa_{4} 
\end{equation}
for fixed value of $t$. Substituting the values of $\kappa_{1}, \kappa_{2}, \kappa_{3}$ and $\kappa_{4}$ in the function $\varphi$ defined by (\ref{equ19}), yields
$$\varphi(c,t)=\frac{9U_{1}^{2}(t)}{(2\delta^{2}+1)^{2}(2\lambda+\mu)^{2}}+\frac{\chi_{1}c^{4}+36\chi_{2}c^{2}}{96\delta(\delta^{3}+2\delta)(2\delta^{2}+1)^{2}(3\lambda+\mu)(2\lambda+\mu)^{2}(\lambda+\mu)^{4}},$$
where
\begin{align*}
\chi_{1}&=(2\lambda+\mu)^{2}U_{1}(t)\left|\Omega_{\lambda,\mu,\delta}(t) \right|+18(\lambda+\mu)^{3}\bigg( 3\delta(\delta^{3}+2\delta)(\lambda+\mu)(3\lambda+\mu)-(2\delta^{2}+1)^{2}(2\lambda+\mu)^{2}\bigg)U_{1}^{2}(t)
\\&-9(\lambda+\mu)^{2}(2\lambda+\mu)U_{1}(t)\bigg((3\lambda+\mu)(8\delta^{4}-4\delta^{2}+5)U_{1}^{2}(t)+4(2\delta^{2}+1)^{2}(2\lambda+\mu)(\lambda+\mu)U_{2}(t)\bigg),
\\\chi_{2}&=\bigg[(2\lambda+\mu)(3\lambda+\mu)\left(8\delta^{4}-4\delta^{2}+5\right)U_{1}^{3}(t)+4(2\delta^{2}+1)^{2}(\lambda+\mu)(2\lambda+\mu)^{2}U_{1}(t)U_{2}(t)
\\&+(\lambda+\mu)U_{1}^{2}(t)\big(2(2\delta^{2}+1)^{2}(2\lambda+\mu)^{2}-12\delta(\delta^{3}+2\delta)(\lambda+\mu)(3\lambda+\mu)\big) \bigg](\lambda+\mu)^{2}. 
\end{align*}
Assume that $\varphi(c,t)$ has a maximum value in an interior of $c\in[0,2]$, by elementary calculation, we obtain that
$$\varphi^{\prime}(c,t)=\frac{\chi_{1}c^{3}+18\chi_{2}c}{24\delta(\delta^{3}+2\delta)(2\delta^{2}+1)^{2}(3\lambda+\mu)(2\lambda+\mu)^{2}(\lambda+\mu)^{4}}.$$
We will examine the sign of the function $\varphi^{\prime}(c,t)$ depending on the different cases of the signs of $\chi_{1}$ and $\chi_{2}$ as follows:
\begin{enumerate}
\item Let $\chi_{1}\geq0$ and $\chi_{2}\geq0$, then  $\varphi^{\prime}(c,t)\geq0$, so  $\varphi(c,t)$ is an increasing function. Therefore
\begin{align}
\max\left\lbrace \varphi(c,t):c\in(0,2)\right\rbrace &=\varphi(2^{-},t)
\nonumber\\&=\frac{9U_{1}^{2}(t)}{(2\delta^{2}+1)^{2}(2\lambda+\mu)^{2}}+\frac{\chi_{1}+9\chi_{2}}{6\delta(\delta^{3}+2\delta)(2\delta^{2}+1)^{2}(3\lambda+\mu)(2\lambda+\mu)^{2}(\lambda+\mu)^{4}}.\label{equ20}
\end{align}
That is, $\max\left\lbrace \max\left\lbrace \psi(\gamma_{1},\gamma_{2}):0\leq\gamma_{1},\gamma_{2}\leq1\right\rbrace :0<c<2 \right\rbrace=\varphi(2^{-},t)$.
\item Let $\chi_{1}\leq0$ and $\chi_{2}\leq0$, then $\varphi^{\prime}(c,t)\leq0$, so $\varphi(c,t)$ is an decreasing function on the interval $(0,2)$. Therefore
\begin{equation}\label{equ21}
\max\left\lbrace \varphi(c,t):c\in(0,2) \right\rbrace=\varphi(0^{+},t)=4\kappa_{4}=\frac{9U_{1}^{2}(t)}{(2\delta^{2}+1)^{2}(2\lambda+\mu)^{2}}. 
\end{equation}
\item Let $\chi_{1}>0$ and $\chi_{2}<0$, then $c_{0}=\sqrt{\frac{-18\chi_{2}}{\chi_{1}}}$ is a critical point of the function $\varphi(c,t)$. We suppose that, $c_{0}\in(0,2)$, since $\varphi^{\prime \prime}(c_{0},t)>0$, $c_{0}$ is a local minimum point of the function $\varphi(c,t)$. That is the function $\varphi(c,t)$ cannot have a local maximum.
\item Let $\chi_{1}<0$ and $\chi_{2}>0$, then $c_{0}$ is a critical point of the function $\varphi(c,t)$. We assume that $c_{0}\in(0,2)$. Since $\varphi^{\prime \prime}(c_{0},t)<0$, $c_{0}$ is a local maximum point of the function $\varphi(c,t)$ and maximum value occurs at $c=c_{0}$. Therefore
\begin{equation}\label{equ22}
\max\left\lbrace\varphi(c,t):c\in(0,2) \right\rbrace=\varphi(c_{0},t) 
\end{equation}
where
$$\varphi(c_{0},t)=\frac{9U_{1}^{2}(t)}{(2\delta^{2}+1)^{2}(2\lambda+\mu)^{2}}-\frac{27\chi_{2}^{2}}{8\chi_{1}\delta(\delta^{3}+2\delta)(2\delta^{2}+1)^{2}(3\lambda+\mu)(2\lambda+\mu)^{2}(\lambda+\mu)^{4}}.$$
\end{enumerate}
Thus, from (\ref{equ20}) to (\ref{equ22}), the proof of Theorem \ref*{theo1} is completed.
\end{proof}

Now, we would like to draw attention to some remarkable results which are obtained for some values of $\lambda$, $\mu$ and $\delta$ in Theorem \ref{theo1}.
\begin{corollary}\label{corollary1}
Let $h_{\delta}\in \sigma$ of the form (\ref{functionh}) be in $\mathcal{B}_{\sigma}^{\delta}(\lambda,t)$. Then
\begin{align*}
\left|a_{2}a_{4}-a_{3} ^{2}\right| &\leq
\begin{cases}
\varphi(2^{-},t)      ,&\chi_{3}\geq0 \text{ \and\ } \chi_{4}\geq0\\
\frac{36t^{2}}{(2\delta^{2}+1)^{2}(2\lambda+1)^{2}}, & \chi_{3}\leq0 \text{ \and\ } \chi_{4}\leq0\\
\max\left\lbrace\frac{36t^{2}}{(2\delta^{2}+1)^{2}(2\lambda+1)^{2}},\text{ \ \ }\varphi(2^{-},t) \right\rbrace, &\chi_{3}>0 \text{ \and\ } \chi_{4}<0\\
\max\left\lbrace\varphi(c_{0},t), \varphi(2^{-},t) \right\rbrace  ,&\chi_{3}<0 \text{ \and\ } \chi_{4}>0
\end{cases},
\end{align*}
where
$$\varphi(2^{-},t)=\frac{9U_{1}^{2}(t)}{(2\delta^{2}+1)^{2}(2\lambda+1)^{2}}+\frac{\chi_{3}+9\chi_{4}}{6\delta(\delta^{3}+2\delta)(2\delta^{2}+1)^{2}(3\lambda+1)(2\lambda+1)^{2}(\lambda+1)^{4}},$$
$$\varphi(c_{0},t)=\frac{9U_{1}^{2}(t)}{(2\delta^{2}+1)^{2}(2\lambda+1)^{2}}-\frac{27\chi_{4}^{2}}{8\chi_{3}\delta(\delta^{3}+2\delta)(2\delta^{2}+1)^{2}(3\lambda+1)(2\lambda+1)^{2}(\lambda+1)^{4}}, \text{ \ \ \ } c_{0}=\sqrt{\frac{-18\chi_{4}}{\chi_{3}}}$$
and $\chi_{3}=\chi_{1}(\lambda,\mu=1,\delta;t)$, $\chi_{4}=\chi_{2}(\lambda,\mu=1,\delta;t)$.
\end{corollary}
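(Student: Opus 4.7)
The plan is to obtain Corollary \ref{corollary1} as a direct specialization of Theorem \ref{theo1} at $\mu=1$. First I would note that the notation $\mathcal{B}_{\sigma}^{\delta}(\lambda,t)$ matches the $\mu=1$ slice of the family $\mathcal{N}_{\sigma}^{\mu,\delta}(\lambda,t)$: indeed, setting $\mu=1$ in (\ref{equ5})--(\ref{equ6}) collapses the factor $(h_{\delta}(z)/z)^{\mu-1}$ to $1$, leaving the subordinations
\[
(1-\lambda)\frac{h_{\delta}(z)}{z}+\lambda h_{\delta}^{\prime}(z)\prec G(z,t),\qquad (1-\lambda)\frac{k_{\delta}(w)}{w}+\lambda k_{\delta}^{\prime}(w)\prec G(w,t),
\]
which is precisely the $\delta$-generalization of the class $\mathcal{B}_{\sigma}(\lambda,t)$ of Remark (i). So $h_{\delta}\in\mathcal{B}_{\sigma}^{\delta}(\lambda,t)$ if and only if $h_{\delta}\in\mathcal{N}_{\sigma}^{1,\delta}(\lambda,t)$.

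Next I would substitute $\mu=1$ throughout the bound of Theorem \ref{theo1}. The factors $\lambda+\mu$, $2\lambda+\mu$, $3\lambda+\mu$ become $\lambda+1$, $2\lambda+1$, $3\lambda+1$ respectively; consequently the leading term $9U_{1}^{2}(t)/((2\delta^{2}+1)^{2}(2\lambda+\mu)^{2})$ becomes $9U_{1}^{2}(t)/((2\delta^{2}+1)^{2}(2\lambda+1)^{2})$, and since $U_{1}^{2}(t)=4t^{2}$ this is exactly $36t^{2}/((2\delta^{2}+1)^{2}(2\lambda+1)^{2})$ as required in the second branch. The denominator $6\delta(\delta^{3}+2\delta)(2\delta^{2}+1)^{2}(3\lambda+\mu)(2\lambda+\mu)^{2}(\lambda+\mu)^{4}$ specializes similarly. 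By definition, $\chi_{3}$ and $\chi_{4}$ are just $\chi_{1}$ and $\chi_{2}$ evaluated at $\mu=1$, so the four-case inequality of Theorem \ref{theo1} carries over verbatim with $\chi_{1},\chi_{2}$ replaced by $\chi_{3},\chi_{4}$.

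The only small check I would carry out carefully is that no factor that was assumed nonzero in the proof of Theorem \ref{theo1} vanishes at $\mu=1$. In particular the combination $\mu^{2}+3\mu-4$ that appears inside $\Omega_{\lambda,\mu,\delta}(t)$ does vanish at $\mu=1$, so $\Omega$ reduces to
\[
\Omega_{\lambda,1,\delta}(t)=18(2\delta^{2}+1)^{2}(\lambda+1)^{3}U_{3}(t)-54\delta(\delta^{3}+2\delta)(3\lambda+1)U_{1}^{3}(t),
\]
and I would verify that this simplification is propagated consistently into $\chi_{3}$. No other factor in the denominators is affected (all $(2\delta^{2}+1)$, $(\lambda+1)$, $(2\lambda+1)$, $(3\lambda+1)$, $\delta(\delta^{3}+2\delta)$ are strictly positive for the admissible parameters $\lambda\geq 1$, $\delta\geq 1$), so the critical-point analysis and the case distinctions on the signs of $\chi_{1},\chi_{2}$ in the proof of Theorem \ref{theo1} transfer without modification.

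I do not anticipate a genuine obstacle: the content of the corollary is a substitution and a relabeling. The only bookkeeping risk is mistranscribing the simplified $\Omega_{\lambda,1,\delta}$ into $\chi_{3}$; beyond that, quoting Theorem \ref{theo1} with $\mu=1$ closes the argument.
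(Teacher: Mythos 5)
Your proposal is correct and matches the paper's (implicit) argument exactly: the corollary is obtained by setting $\mu=1$ in Theorem \ref{theo1}, identifying $\mathcal{B}_{\sigma}^{\delta}(\lambda,t)$ with $\mathcal{N}_{\sigma}^{1,\delta}(\lambda,t)$, and relabeling $\chi_{1},\chi_{2}$ as $\chi_{3},\chi_{4}$, with $9U_{1}^{2}(t)=36t^{2}$ giving the stated constant. Your extra check that $\mu^{2}+3\mu-4$ vanishes at $\mu=1$ (simplifying $\Omega_{\lambda,1,\delta}$) and that no denominator degenerates is sound and goes slightly beyond what the paper records.
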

Taking $\lambda=1$ in the Theorem \ref{theo1}, we get the following result.
\begin{corollary}\label{corollary2}
Let $h_{\delta}\in \sigma$ of the form (\ref{functionh}) be in $\mathcal{B}_{\sigma}^{\mu,\delta}(t)$. Then
\begin{align*}
\left|a_{2}a_{4}-a_{3} ^{2}\right| &\leq
\begin{cases}
\varphi(2^{-},t),      &\chi_{5}\geq0 \text{ \and\ } \chi_{6}\geq0\\
\frac{36t^{2}}{(2\delta^{2}+1)^{2}(2+\mu)^{2}}, &\chi_{5}\leq0 \text{ \and\ } \chi_{6}\leq0\\
\max\left\lbrace\frac{36t^{2}}{(2\delta^{2}+1)^{2}(2+\mu)^{2}},\text{ \ \ }\varphi(2^{-},t) \right\rbrace ,&\chi_{5}>0 \text{ \and\ } \chi_{6}<0\\
\max\left\lbrace\varphi(c_{0},t), \varphi(2^{-},t) \right\rbrace,  &\chi_{5}<0 \text{ \and\ } \chi_{6}>0
\end{cases},
\end{align*}
where
$$\varphi(2^{-},t)=\frac{9U_{1}^{2}(t)}{(2\delta^{2}+1)^{2}(2+\mu)^{2}}+\frac{\chi_{5}+9\chi_{6}}{6\delta(\delta^{3}+2\delta)(2\delta^{2}+1)^{2}(3+\mu)(2+\mu)^{2}(1+\mu)^{4}},$$
$$\varphi(c_{0},t)=\frac{9U_{1}^{2}(t)}{(2\delta^{2}+1)^{2}(2+\mu)^{2}}-\frac{27\chi_{6}^{2}}{8\chi_{5}\delta(\delta^{3}+2\delta)(2\delta^{2}+1)^{2}(3+\mu)(2+\mu)^{2}(1+\mu)^{4}}, \text{ \ \ \ } c_{0}=\sqrt{\frac{-18\chi_{6}}{\chi_{5}}}$$
and $\chi_{5}=\chi_{1}(\lambda=1,\mu,\delta;t)$, $\chi_{6}=\chi_{2}(\lambda=1,\mu,\delta;t)$.
\end{corollary}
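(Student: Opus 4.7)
The plan is to unfold the two subordinations defining $\mathcal{N}_{\sigma}^{\mu,\delta}(\lambda,t)$ into power series and match coefficients of $z^n, w^n$ for $n=1,2,3$ to express $a_2 b_2(\delta), a_3 b_3(\delta), a_4 b_4(\delta)$ in terms of the Carath\'eodory coefficients $c_i, d_i$ of the auxiliary functions $p_1, p_2 \in \mathcal{P}$ built from the Schwarz functions $u, v$ via (\ref{equ10})--(\ref{equ11}). This step is algorithmic: substitute $u(z), v(w)$ from (\ref{equ12})--(\ref{equ13}) into the generating function $G(t, \cdot)$ to obtain the right-hand sides (\ref{equ14})--(\ref{equ15}), then compare with the left-hand sides of (\ref{equ8})--(\ref{equ9}). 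Adding the first-coefficient equations forces $c_1 = -d_1$; the resulting system then yields closed-form expressions for $a_2, a_3$, and (after subtracting the two third-coefficient equations to eliminate the cubic-in-$a_2$ terms) for $a_4$.

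Next I would form $a_2 a_4 - a_3^2$ and collect terms into monomials $c_1^4$, $c_1^2(c_2 \pm d_2)$, $c_1(c_3 - d_3)$ and $(c_2 - d_2)^2$, as in (\ref{eq7}), and invoke Lemma~\ref{lem2} to write these expressions in terms of $c_1$ and auxiliary parameters $x, y, z, w$ of modulus at most one. Applying the triangle inequality (and $|z|, |w| \le 1$) collapses the $z, w$ dependence and leaves a bound $\psi(\gamma_1, \gamma_2) = \kappa_1 + \kappa_2(\gamma_1 + \gamma_2) + \kappa_3(\gamma_1^2 + \gamma_2^2) + \kappa_4(\gamma_1 + \gamma_2)^2$, with $\gamma_i \in [0,1]$ and $\kappa_j = \kappa_j(c, t)$. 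Rotational invariance of $\mathcal{P}$ permits the normalization $c_1 = c \in [0, 2]$.

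The maximization of $\psi$ over the closed square $\mathbb{S} = [0,1]^2$ proceeds in two stages. First, noting the signs $\kappa_3 \le 0$ and $\kappa_3 + 2\kappa_4 \ge 0$, the Hessian test gives $\psi_{\gamma_1 \gamma_1} \psi_{\gamma_2 \gamma_2} - \psi_{\gamma_1 \gamma_2}^2 < 0$, ruling out an interior local maximum. Second, a case analysis on each edge---split further according to the sign of $\kappa_3 + \kappa_4$---shows that both $\phi(\gamma_2) := \psi(0, \gamma_2)$ and $\Psi(\gamma_2) := \psi(1, \gamma_2)$ are increasing on $[0,1]$, so the maximum is attained at the corner $(1,1)$. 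This produces the one-variable function $\varphi(c, t) := \psi(1,1) = \kappa_1 + 2\kappa_2 + 2\kappa_3 + 4\kappa_4$.

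Finally I would optimize $\varphi(c, t)$ over $c \in [0, 2]$. Differentiating yields $\varphi'(c, t) = (\chi_1 c^3 + 18 \chi_2 c)/D$ for a positive constant $D$, reducing the problem to the sign of $\chi_1 c^2 + 18\chi_2$; the four sign combinations of $(\chi_1, \chi_2)$ produce the four branches of the theorem, with the unique positive critical point $c_0 = \sqrt{-18\chi_2/\chi_1}$ appearing in the mixed-sign cases and the endpoint values $\varphi(0^+, t) = 36t^2/[(2\delta^2+1)^2 (2\lambda+\mu)^2]$ and $\varphi(2^-, t)$ covering the remaining cases. The main obstacle I anticipate is the bookkeeping: the coefficient expressions for $a_3$ and especially $a_4$ become unwieldy because of the convolution factors $b_n(\delta)$ from (\ref{functionnew}), and consolidating them into the compact quantities $\Omega_{\lambda,\mu,\delta}(t), \chi_1, \chi_2$ demands careful tracking of the powers of $(\lambda + \mu), (2\lambda + \mu), (3\lambda + \mu), \delta, \delta^3 + 2\delta$, and $2\delta^2 + 1$ to avoid sign errors in the final displayed formulas.
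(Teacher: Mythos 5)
Your proposal is correct and follows essentially the same route as the paper: it reproduces, step by step, the paper's proof of Theorem~\ref{theo1} (coefficient matching via the subordinations, Lemma~\ref{lem2}, the triangle inequality, maximization of $\psi$ over the square $\mathbb{S}$, and the four-way sign analysis of $\chi_{1},\chi_{2}$), of which this corollary is exactly the special case $\lambda=1$. The paper itself obtains the corollary in one line by substituting $\lambda=1$ into Theorem~\ref{theo1}, so the only thing missing from your write-up is the explicit final specialization $\chi_{5}=\chi_{1}(\lambda=1,\mu,\delta;t)$, $\chi_{6}=\chi_{2}(\lambda=1,\mu,\delta;t)$, which turns your general expressions (e.g.\ $(2\lambda+\mu)^{2}$) into the stated ones (e.g.\ $(2+\mu)^{2}$).
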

Putting $\lambda=1$ and $\mu=1$ in the Theorem \ref{theo1}, we find the following result.
\begin{corollary}\label{corollary3}
If $h_{\delta}\in \mathcal{B}_{\sigma}^{\delta}(t)$ is of the form (\ref{functionh}). Then
\begin{align*}
\left| a_{2}a_{4}-a_{3}^{2}\right|\leq
\begin{cases}
\varphi(2^{-},t), &\frac{1}{2}<t\leq t_{0}\\
\varphi(c_{0},t), &t_{0}< t <1
\end{cases},
\end{align*}
where
$$\varphi(2^{-},t)=\frac{3t^{2}\big((2\delta^{2}+1)^{2}-t(5\delta^{4}+2\delta^{2}+2)\big)}{(\delta^{2}+2)(2\delta^{3}+\delta)^{2}},$$
$$\varphi(c_{0},t)=\frac{4t^{2}}{(2\delta^{2}+1)^{2}}-\frac{t(\nabla_{(4,20,-3);(22,40,64)}^{(3,1,-1)})^{2}}{\delta(2\delta^{2}+1)^{2}(\delta^{3}+2\delta)\big(\nabla_{(-4,4,-3);(22,40,64)}^{(3,1,-1)}+6t\left|t(5\delta^{4}+2\delta^{2}+2)-(2\delta^{2}+1)^{2}
\right|\big)} ,$$
$\nabla_{(a,b,c);(d,e,f)}^{(m,n,r)}(\delta;t)=\nabla_{(a,b,c);(d,e,f)}^{(m,n,r)}=m(1+2\delta^{2})^{2}+nt(a+bt^{2}+ct^{4})+rt^{2}(d+et^{2}+ft^{4})$. Moreover, the value of $t_{0}$ is root of equation $\chi_{1}=0$ for $\lambda=\mu=1$ and $\frac{1}{2}<t<1$. 
\end{corollary}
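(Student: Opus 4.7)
The plan is to follow the standard coefficient-comparison method for bi-univalent classes, adapted to the convolution setting of $h_\delta = f * f_\delta$. First I would translate the subordinations (\ref{equ5}) and (\ref{equ6}) into equalities by introducing Schwarz functions $u(z), v(w)$ and the associated Carath\'eodory functions $p_1, p_2 \in \mathcal{P}$ with coefficients $c_n, d_n$ as in (\ref{equ10})--(\ref{equ11}). Expanding $G(t, u(z))$ and $G(t, v(w))$ via (\ref{equ3}) gives the series (\ref{equ14})--(\ref{equ15}). Expanding the left-hand sides of (\ref{equ8})--(\ref{equ9}) in powers of $z, w$ and matching coefficients through order three would produce the six relations analogous to (\ref{eq0})--(\ref{eq5}), where the coefficients $b_n(\delta)$ from (\ref{functionnew}) (namely $b_2 = \delta$, $b_3 = (2\delta^2+1)/3$, $b_4 = (\delta^3+2\delta)/3$) enter multiplicatively. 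From the first pair I read off $c_1 = -d_1$ and a formula for $a_2$; combining the second pair yields $a_3$; subtracting the third pair and substituting the previous expressions yields $a_4$.

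Next I would assemble the Hankel functional $a_2 a_4 - a_3^2$ to obtain an identity of the form (\ref{eq7}) with an explicit polynomial $\Delta_{\lambda,\mu,\delta}(t)$ in the $U_j(t)$'s. Invoking Lemma \ref{lem2} together with $c_1 = -d_1$ produces (\ref{eq8})--(\ref{eq10}), expressing $c_2 \pm d_2$ and $c_3 - d_3$ in terms of $c_1$ and auxiliary parameters $x, y$ with $|x|, |y| \leq 1$. Substituting into (\ref{eq7}) and applying the triangle inequality, then using rotation-invariance of $\mathcal{P}$ to reduce to $c_1 = c \in [0,2]$ and $\eta = |x|, \zeta = |y| \in [0,1]$, produces an upper estimate of the form
\[
|a_2 a_4 - a_3^2| \leq \kappa_1 + \kappa_2(\eta + \zeta) + \kappa_3(\eta^2 + \zeta^2) + \kappa_4(\eta + \zeta)^2 =: \psi(\eta, \zeta),
\]
with the $\kappa_i$ depending on $c$ and $t$ (here $\Omega_{\lambda,\mu,\delta}$ replaces $\Delta_{\lambda,\mu,\delta}$ via $|U_1 - 2U_2 + U_3| \leq |\Omega|/(\ldots)$, i.e.\ the contribution of the lower-order Chebyshev terms is absorbed into an absolute value).

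The third step is a two-variable optimization of $\psi$ over the closed square $\mathbb{S} = [0,1]^2$. I would check the sign of the Hessian discriminant $\psi_{\eta\eta}\psi_{\zeta\zeta} - \psi_{\eta\zeta}^2$, verify it is negative using $\kappa_3 \leq 0$ and $\kappa_3 + 2\kappa_4 \geq 0$, and conclude that no interior maximum exists. A direct boundary inspection — splitting into $\kappa_3 + \kappa_4 \geq 0$ and $\kappa_3 + \kappa_4 < 0$ on each edge — shows that the maximum is attained at the corner $(1,1)$, giving $\psi(1,1) = \kappa_1 + 2\kappa_2 + 2\kappa_3 + 4\kappa_4$.

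Finally, I would define $\varphi(c,t) := \psi(1,1)$ and collect terms so that $\varphi(c,t) = A + (\chi_1 c^4 + 36 \chi_2 c^2)/B$ for the constants $\chi_1, \chi_2$ as stated. Since $\varphi'(c,t) = (\chi_1 c^3 + 18 \chi_2 c)/C$, the maximization over $c \in [0,2]$ splits into the four sign cases: $(\chi_1, \chi_2)$ both $\geq 0$ gives an increasing $\varphi$ and the bound $\varphi(2^-, t)$; both $\leq 0$ gives a decreasing $\varphi$ and the bound $\varphi(0^+, t) = 4\kappa_4 = 36 t^2 / ((2\delta^2+1)^2(2\lambda+\mu)^2)$; $\chi_1 > 0, \chi_2 < 0$ makes $c_0 = \sqrt{-18\chi_2/\chi_1}$ a local minimum, so the maximum is attained at an endpoint and one takes the larger of the two endpoint values; $\chi_1 < 0, \chi_2 > 0$ makes $c_0$ a local maximum, competing with the endpoint value at $c = 2^-$. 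Evaluating $\varphi$ at $c_0$ yields the stated expression $\varphi(c_0, t)$, completing the theorem. The main obstacle throughout is purely bookkeeping: carrying the factors $b_2(\delta), b_3(\delta), b_4(\delta)$ and the Chebyshev coefficients $U_1, U_2, U_3$ through the substitution chain without algebraic error, and being careful that the absolute-value appearing in $|\Omega_{\lambda,\mu,\delta}(t)|$ is introduced \emph{before} the triangle inequality rather than after, since the sign of $\Omega$ affects neither the corner-maximum analysis nor the final case split.
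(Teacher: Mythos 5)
Your outline accurately reproduces the proof of Theorem \ref{theo1} (coefficient comparison via (\ref{eq0})--(\ref{eq5}), Lemma \ref{lem2}, the corner-maximum argument on $\mathbb{S}$, and the four sign cases for $\chi_{1},\chi_{2}$), and that machinery is indeed what the paper relies on. But the statement you were asked to prove is Corollary \ref{corollary3}, whose entire content beyond Theorem \ref{theo1} is the specialization $\lambda=\mu=1$, and your proposal stops before that step. The paper's own proof is the one-line substitution into Theorem \ref{theo1}; what actually has to be checked --- and what neither your proposal nor, admittedly, the paper spells out --- is why the four-branch bound of the theorem collapses to the two-branch bound of the corollary with a single threshold $t_{0}$.

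Concretely, after setting $\lambda=\mu=1$ and inserting $U_{1}(t)=2t$, $U_{2}(t)=4t^{2}-1$, $U_{3}(t)=8t^{3}-4t$ into $\chi_{1}$ and $\chi_{2}$, one must verify that $\chi_{2}>0$ for all $t\in(\tfrac{1}{2},1)$, and that $\chi_{1}$ is positive on $(\tfrac{1}{2},t_{0}]$ and negative on $(t_{0},1)$, where $t_{0}$ is the unique root of $\chi_{1}=0$ in that interval. Only then do cases (1) and (4) of the theorem apply on the respective subintervals; moreover, in case (4) the theorem gives $\max\{\varphi(c_{0},t),\varphi(2^{-},t)\}$, so one must additionally argue (or verify) that $\varphi(c_{0},t)\geq\varphi(2^{-},t)$ on $(t_{0},1)$ to justify reporting $\varphi(c_{0},t)$ alone. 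Finally, the corollary's closed forms --- the expression $\frac{3t^{2}\bigl((2\delta^{2}+1)^{2}-t(5\delta^{4}+2\delta^{2}+2)\bigr)}{(\delta^{2}+2)(2\delta^{3}+\delta)^{2}}$ and the $\nabla$-notation for $\varphi(c_{0},t)$ --- are obtained by this same substitution and simplification, which your proposal never performs. So the gap is not in the general method (which you have right) but in the absence of the sign analysis and algebraic specialization that turn Theorem \ref{theo1} into Corollary \ref{corollary3}.
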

Setting $\delta=1$ in the Corollary \ref{corollary3}, we obtain the following result.
\begin{corollary} 
If $h_{\delta}\in \mathcal{B}_{\sigma}(t)$ is of the form (\ref{functionh}). Then
\begin{align*}
\left| a_{2}a_{4}-a_{3}^{2}\right|\leq
\begin{cases}
t^{2}(1-t^{2}),  &\frac{1}{2}<t\leq t_{0_{1}}\\
\frac{t(260t^{4}+84t^{3}-139t^{2}-18t+9)}{8(18t^{3}+42t^{2}-17t-9)}  ,&t_{0_{1}}<t<1
\end{cases},
\end{align*}
where, the value of $t_{0_{1}}$, which is aproximately $t_{0_{1}}=0.603615$, is root of equation $\chi_{1}=0$ for $\lambda=\mu=\delta=1$ and $\frac{1}{2}<t<1$.
\end{corollary}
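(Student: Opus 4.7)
The plan is to specialize Theorem \ref{theo1} to the single point $\lambda=\mu=\delta=1$ and simplify. Three tasks are needed: evaluate $\chi_{1}$ and $\chi_{2}$ explicitly as polynomials in $t$; identify which of the four sign-based branches of Theorem \ref{theo1} applies on each subinterval of $(1/2,1)$; and reduce the symbolic expressions $\varphi(2^{-},t)$ and $\varphi(c_{0},t)$ to the stated closed forms.

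For the first task, substituting $\delta=1$ gives $(2\delta^{2}+1)^{2}=9$, $\delta(\delta^{3}+2\delta)=3$, $8\delta^{4}-4\delta^{2}+5=9$, and $\lambda=\mu=1$ gives $2\lambda+\mu=3$, $3\lambda+\mu=4$, $\lambda+\mu=2$, $\mu^{2}+3\mu-4=0$. Plugging these in along with $U_{1}(t)=2t$, $U_{2}(t)=4t^{2}-1$, $U_{3}(t)=8t^{3}-4t$, the quantity $\Omega_{1,1,1}(t)$ simplifies to $5184\,t(t^{2}-1)$, which is negative on $(1/2,1)$, so $|\Omega_{1,1,1}(t)|=5184\,t(1-t^{2})$. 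Routine collection of terms then yields
$$\chi_{1}=-5184\,t\bigl(18t^{3}+42t^{2}-17t-9\bigr),\qquad \chi_{2}=576\,t\bigl(42t^{2}-7t-9\bigr).$$
Let $t_{0_{1}}\in(1/2,1)$ be the unique root of $18t^{3}+42t^{2}-17t-9=0$ (numerically $t_{0_{1}}\approx 0.603615$), and let $t^{\ast}$ be the positive root of $42t^{2}-7t-9=0$; a direct check gives $t^{\ast}\approx 0.554<t_{0_{1}}$. Hence on $(1/2,t^{\ast})$ Case 3 of Theorem \ref{theo1} applies ($\chi_{1}>0$, $\chi_{2}<0$); on $(t^{\ast},t_{0_{1}})$ Case 1 applies ($\chi_{1},\chi_{2}>0$); and on $(t_{0_{1}},1)$ Case 4 applies ($\chi_{1}<0$, $\chi_{2}>0$).

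The final simplifications then proceed as follows. First, $\chi_{1}+9\chi_{2}=10368\,t^{2}(5-9t^{2})$, giving $\varphi(2^{-},t)=\tfrac{4t^{2}}{9}+\tfrac{t^{2}(5-9t^{2})}{9}=t^{2}(1-t^{2})$. Second, the identity $\varphi(c_{0},t)=\tfrac{4t^{2}}{9}-\tfrac{\chi_{2}^{2}}{4608\,\chi_{1}}$ reduces, by a common-denominator computation, to $\dfrac{t(260t^{4}+84t^{3}-139t^{2}-18t+9)}{8(18t^{3}+42t^{2}-17t-9)}$, the key step being the polynomial identity $32t(18t^{3}+42t^{2}-17t-9)+(42t^{2}-7t-9)^{2}=9(260t^{4}+84t^{3}-139t^{2}-18t+9)$. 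For the case maxima, in Case 3 the comparison $t^{2}(1-t^{2})\geq \tfrac{4t^{2}}{9}$ is equivalent to $t^{2}\leq 5/9$, which holds throughout $(1/2,t^{\ast})\subset (1/2,\sqrt{5}/3)$, so the bound there is $t^{2}(1-t^{2})$. In Case 4, $\varphi_{c}(c,t)$ is proportional to $c(\chi_{1}c^{2}+18\chi_{2})$, so $\varphi$ is strictly increasing on $(0,\min\{c_{0},2\})$; this forces $\varphi(c_{0},t)\geq \varphi(2^{-},t)$ whether or not $c_{0}\in(0,2)$, so the bound there is $\varphi(c_{0},t)$. Combining the three branches then yields the stated two-piece bound, since $\varphi(2^{-},t)$ is continuous across $t^{\ast}$. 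The main obstacle is the tedious polynomial arithmetic producing the factored forms of $\chi_{1}$, $\chi_{2}$, and the closed form for $\varphi(c_{0},t)$; no conceptual step beyond specialization is involved.
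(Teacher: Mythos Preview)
Your proposal is correct and follows the same route as the paper: the corollary is obtained there simply by setting $\delta=1$ in Corollary~\ref{corollary3} (equivalently, putting $\lambda=\mu=\delta=1$ in Theorem~\ref{theo1}), with no further argument given. You carry out explicitly the sign analysis and polynomial simplifications that the paper leaves to the reader, and your computations of $\chi_{1}$, $\chi_{2}$, $\varphi(2^{-},t)$, and $\varphi(c_{0},t)$ check out.
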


Next, taking $\lambda=1$ and $\mu=0$ in the Theorem \ref*{theo1}, we arrive at the following result.
\begin{corollary}\label{corollary5}
If $h_{\delta}\in \mathcal{S}_{\sigma}^{\star,\delta}(t)$ is of the form (\ref{functionh}), then
\begin{align*}
\left| a_{2}a_{4}-a_{3}^{2}\right|\leq
\begin{cases}
\varphi(2^{-},t),  &\frac{1}{2}<t\leq t_{0_{2}}\\
\varphi(c_{0},t), &t_{0_{2}}<t<1
\end{cases},
\end{align*}
where,
$$\varphi(2^{-},t)=\frac{8t^{2}\big((2\delta^{2}+1)^{2}-6t^{2}(\delta^{2}-1)^{2}\big)}{(2\delta^{3}+\delta)^{2}(\delta^{2}+2)},$$
$$\varphi(c_{0},t)=\frac{9t^{2}}{(2\delta^{2}+1)^{2}}-\frac{t(\nabla_{(-2,10,1);(23,20,56)}^{(-2,-1,1)})^{2}}{\delta(2\delta^{2}+1)^{2}(\delta^{3}+2\delta)\big(\nabla_{(4,-2,7);(23,20,56)}^{(-4,1,2)}-8t\left|6t^{2}(\delta^{2}-1)^{2}-(2\delta^{2}+1)^{2}\right|\big)}.$$
Moreover, the value of $t_{0_{2}}$ is root of equation  $\chi_{1}=0$ for $\lambda=1$, $\mu=0$ and $\frac{1}{2}<t<1$. 
\end{corollary}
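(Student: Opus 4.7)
The plan is to deduce Corollary \ref{corollary5} directly from Theorem \ref{theo1} by substituting $\lambda=1$ and $\mu=0$ and simplifying. First, I would plug the explicit Chebyshev values $U_{1}(t)=2t$, $U_{2}(t)=4t^{2}-1$, $U_{3}(t)=8t^{3}-4t$ into the formulas for $\chi_{1}(\lambda,\mu,\delta;t)$ and $\chi_{2}(\lambda,\mu,\delta;t)$ displayed in Theorem \ref{theo1}. Under $\lambda=1,\mu=0$, both $\chi_{i}$ collapse to polynomials in $t$ and $\delta$; in particular $\Omega_{\lambda,\mu,\delta}(t)$ becomes a concrete polynomial since $\mu^{2}+3\mu-4=-4$ when $\mu=0$, and factors like $(\lambda+\mu)^{k}(2\lambda+\mu)^{\ell}(3\lambda+\mu)$ reduce to plain numerical constants.

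Next, I would analyse the signs of $\chi_{1}$ and $\chi_{2}$ on $(1/2,1)$. The expected behaviour is that $\chi_{2}>0$ throughout $(1/2,1)$ for every $\delta\geq 1$, so that only Case 1 ($\chi_{1}\geq 0$) and Case 4 ($\chi_{1}<0$) of Theorem \ref{theo1} can actually occur. Then I would show that, for fixed $\delta\geq 1$, $\chi_{1}$ viewed as a polynomial in $t$ has a unique sign change on $(1/2,1)$: by evaluating at the endpoints $t=1/2$ and $t=1$ and checking monotonicity of the dominant factor in $t$, I would isolate the unique root $t_{0_{2}}\in(1/2,1)$, above which $\chi_{1}<0$ and below which $\chi_{1}>0$. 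For $t\leq t_{0_{2}}$, Case 1 of Theorem \ref{theo1} directly yields the bound $\varphi(2^{-},t)$, which I would simplify to the stated closed form by collecting powers of $t$ and $\delta$ in the numerator $\chi_{1}+9\chi_{2}$ and using $(2\lambda+\mu)^{2}(\lambda+\mu)^{4}=16$ to reduce the denominator to $(2\delta^{3}+\delta)^{2}(\delta^{2}+2)$.

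For $t_{0_{2}}<t<1$ I would apply Case 4 of Theorem \ref{theo1}, whose bound is $\max\{\varphi(c_{0},t),\varphi(2^{-},t)\}$ with $c_{0}=\sqrt{-18\chi_{2}/\chi_{1}}$. I would verify that $c_{0}\in(0,2)$ in this range, and then check the inequality $\varphi(c_{0},t)\geq \varphi(2^{-},t)$ by forming the difference and observing, from the sign information already in hand, that the residual polynomial in $t$ is non-negative on $(t_{0_{2}},1)$; thus the maximum reduces to $\varphi(c_{0},t)$. Finally, I would compute $\varphi(c_{0},t)$ explicitly from the formula in Theorem \ref{theo1} with $\lambda=1,\mu=0$; after substituting $\chi_{1},\chi_{2}$ and collecting monomials, the resulting expression should match the displayed form, with the $\nabla_{(\cdot);(\cdot)}^{(\cdot)}$ shorthand introduced in Corollary \ref{corollary3} used to compress both the squared numerator and the $\chi_{1}$-factor in the denominator.

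The main obstacle will be the sign analysis of $\chi_{1}$: after substitution it becomes a polynomial of fairly high combined degree in $t$ and $\delta$, so isolating the unique zero $t_{0_{2}}\in(1/2,1)$ and confirming that the sign is as claimed on each subinterval requires careful bookkeeping of competing positive and negative terms. A secondary obstacle is the final simplification of $\varphi(c_{0},t)$ into the compact $\nabla$-form, since the ratio $\chi_{2}^{2}/\chi_{1}$ must be rearranged so that its coefficients match the listed patterns $(-2,10,1);(23,20,56);(-2,-1,1)$ in the numerator and $(4,-2,7);(23,20,56);(-4,1,2)$ in the denominator; this is essentially a verification via direct expansion, but it is where the risk of algebraic error is highest.
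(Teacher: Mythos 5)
Your proposal is correct and follows essentially the same route as the paper, which obtains Corollary \ref{corollary5} purely by specializing Theorem \ref{theo1} to $\lambda=1$, $\mu=0$ (the paper gives no further argument). Your added sign analysis of $\chi_{1}$ and $\chi_{2}$ is exactly what is implicitly needed to collapse the four cases of the theorem to the two regimes separated by $t_{0_{2}}$; note only that the comparison $\varphi(c_{0},t)\geq\varphi(2^{-},t)$ in the case $\chi_{1}<0$, $\chi_{2}>0$ is automatic, since $\varphi$ is then a concave parabola in $c^{2}$ whose vertex value dominates every other value.
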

Now, taking $\delta=1$ in the Corollary \ref{corollary5}, we attain the following result.
\begin{corollary}
If $h_{\delta}\in \mathcal{S}_{\sigma}^{\star}(t)$ is of the form (\ref{functionh}), then
\begin{align*}
\left| a_{2}a_{4}-a_{3}^{2}\right|\leq
\begin{cases}
\frac{8t^{2}}{3}, & \frac{1}{2}<t\leq\frac{7+\sqrt{401}}{44}\\
t^{2}+\frac{t(2+t-11t^{2})^{2}}{3(-4-7t+22t^{2})}, &\frac{7+\sqrt{401}}{44}<t<1
\end{cases}.
\end{align*}
\end{corollary}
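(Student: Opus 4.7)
The plan is to obtain this corollary by direct specialization of Corollary \ref{corollary5} at $\delta = 1$. Since the class $\mathcal{S}_{\sigma}^{\star,\delta}(t)$ reduces to $\mathcal{S}_{\sigma}^{\star}(t)$ when $\delta = 1$, everything required is already packaged in the $\delta$-dependent expressions $\varphi(2^{-},t)$ and $\varphi(c_{0},t)$ from Corollary \ref{corollary5}; only the correct case distinction has to be verified and then the formulas simplified.

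First I would substitute $\delta = 1$ into $\varphi(2^{-},t)$. The factor $(\delta^{2}-1)^{2}$ vanishes, so the numerator collapses to $8t^{2}(2\delta^{2}+1)^{2} = 72\,t^{2}$, while the denominator reduces to $(2\delta^{3}+\delta)^{2}(\delta^{2}+2) = 9 \cdot 3 = 27$. This immediately gives $\varphi(2^{-},t) = 8t^{2}/3$, matching the first branch of the claim.

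Next I would evaluate $\varphi(c_{0},t)$ at $\delta = 1$. The leading term $9t^{2}/(2\delta^{2}+1)^{2}$ reduces to $t^{2}$, the absolute value $|6t^{2}(\delta^{2}-1)^{2} - (2\delta^{2}+1)^{2}|$ reduces to $9$, and the prefactor $\delta(2\delta^{2}+1)^{2}(\delta^{3}+2\delta) = 27$. The two $\nabla$-expressions must then be expanded at $\delta = 1$ (so that $(1+2\delta^{2})^{2} = 9$) and collected as polynomials in $t$. Matching the target form $t^{2} + t(2+t-11t^{2})^{2} / \bigl(3(22t^{2}-7t-4)\bigr)$ amounts to recognizing that, after pulling out the common factor of $9$, the resulting numerator is a perfect square of the quadratic $2 + t - 11t^{2}$ and the denominator is proportional to $22t^{2} - 7t - 4$. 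This is a straightforward but lengthy polynomial simplification.

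Finally I would identify the transition value $t_{0_{1}}$. By Corollary \ref{corollary5} it is the root of $\chi_{1} = 0$ lying in $(1/2,1)$ for $\lambda = 1$, $\mu = 0$, $\delta = 1$. Specializing the formula for $\chi_{1}$ from Theorem \ref{theo1} shows that, up to a positive multiplicative constant, it reduces to $22 t^{2} - 7 t - 4 = 0$, whose positive root is $\frac{7+\sqrt{401}}{44}$. One should also verify that $\chi_{2} > 0$ throughout $(1/2,1)$ at these parameters, so that the transition at $t_{0_{1}}$ is genuinely from the case $\chi_{1}\geq 0,\chi_{2}\geq 0$ (giving $\varphi(2^{-},t)$) into the case $\chi_{1} < 0,\chi_{2} > 0$ (giving $\varphi(c_{0},t)$). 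The main obstacle is the bookkeeping in simplifying the degree-six polynomials in $t$ arising from the $\nabla$-expressions and in confirming the cancellations that produce the compact closed form; once those are carried out, the quadratic factor $22t^{2}-7t-4$ appears simultaneously in the denominator of $\varphi(c_{0},t)$ and as the reduced form of $\chi_{1}$, which is the structural reason the two branches meet cleanly at $t = t_{0_{1}}$.
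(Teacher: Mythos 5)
Your proposal is correct and follows exactly the paper's route: the result is obtained by specializing Corollary \ref{corollary5} (equivalently Theorem \ref{theo1} with $\lambda=1$, $\mu=0$) at $\delta=1$, where indeed $\chi_{1}=-648t(22t^{2}-7t-4)$ and $\chi_{2}=144t(11t^{2}-t-2)>0$ on $(1/2,1)$, giving the transition at $t=\tfrac{7+\sqrt{401}}{44}$ and the stated closed forms $\varphi(2^{-},t)=\tfrac{8t^{2}}{3}$ and $\varphi(c_{0},t)=t^{2}+\tfrac{t(2+t-11t^{2})^{2}}{3(22t^{2}-7t-4)}$. Your explicit check that $\chi_{2}>0$ throughout the interval is a detail the paper leaves implicit, and is worth keeping; the only slip is labeling the transition value $t_{0_{1}}$ (which the paper reserves for the $\lambda=\mu=\delta=1$ case) rather than $t_{0_{2}}$.
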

When $\delta=1$, we note that the results given the above coincide with the results in \cite{omb1}.


\begin{thebibliography}{width}
\bibitem{alrs}
\textsc{R. M. Ali, S. K. Lee, V. Ravichandran, S. Supramanian},
The Fekete-Szeg\"{o} coefficient functional for transforms of analytic functions, {\em Bull. Iranian Math. Soc.}, 35 (2009), no. 2, 119-142, 276.

\bibitem{alr}
\textsc{R. M. Ali, S. K. Lee, V. Ravichandran, S. Supramanian},
Coefficient estimates for bi-univalent Ma-Minda starlike and convex functions, {\em Appl. Math. Lett.} 25 (3) (2012), 344-351.

\bibitem{ay0}
\textsc{\c{S}. Alt{\i}nkaya, S. Yal\c{c}{\i}n}, Chebyshev polynomial coefficient bounds for a subclass of bi-univalent functions, arXiv: 1605.08224v1.

\bibitem{ay1}
\textsc{\c{S}. Alt{\i}nkaya, S. Yal\c{c}{\i}n}, Upper bound second Hankel determinant for bi-Bazilevi\u{c}, {\em Mediterr. J. Math.} 13 (2016), no. 6, 4081-4090.

\bibitem{ay2}
\textsc{\c{S}. Alt{\i}nkaya, S. Yal\c{c}{\i}n}, Construction of second Hankel determinant for a new subclass bi-univalent functions, {\em Turkish. J. Math.}; doi: 10.3906/mat-1507-39.

\bibitem{ay3}
\textsc{\c{S}. Alt{\i}nkaya, S. Yal\c{c}{\i}n}, On the Chebyshev polynomial coefficient problem of bi-Bazilevi\u{c} function, {\em Commun. Fac. Sci. Univ. Ank. Series A1}; doi: 10.1501/Commua1-0000000819.

\bibitem{bma}
\textsc{S. Bulut, N. Magesh, C. Abirami}, A comprehensive class of analytic bi-univalent functions by means of Chebyshev polynomials, {\em J. Fract. Calc. Appl.} 8 (2017), no 2, 32-39.

\bibitem{bmb}
\textsc{S. Bulut, N. Magesh, V.K. Balaji}, Initial bounds for analytic and bi-univalent functions by means of Chebyshev polynomials, {\em J. Class. Anal.} 8 (2017), In-press.

\bibitem{cantor}
\textsc{D.G. Cantor},  Power series with the integral coefficients, {\em Bull. Amer. Math. Soc.}, 69(1963), 362-366.

\bibitem{chihara}
\textsc{T.S. Chihara}, {\em An Introduction to Orthogonal Polynomials}, Gordon and Breach, Science Publisher Inc., New York, (1978).

\bibitem{coy}
\textsc{M. \c{C}a\u{g}lar, H. Orhan, N. Ya\u{g}mur}, Coefficient bounds for new subclasses of bi-univalent functions, {\em Filomat} 27 (2013), no 7, 1165-1171.

\bibitem{dt}
\textsc{V.K. Deekonda, R. Thoutreedy}, An upper bound to the second Hankel determinant for functions in Mocanu class, {\em Vietnam J. Math.}, (2015) 43:541-549.

\bibitem{dco}
\textsc{E. Deniz, M. \c{C}a\u{g}lar, H. Orhan}, Second Hankel determinant for bi-starlike and bi-convex functions of oder $\beta$, {\em Appl. Math. Comput.} 271 (2015), 301-307.

\bibitem{dienes} 
\textsc{P. Dienes}, {\em  The Taylor series: An introduction to the theory of functions of a complex vartiable}, Dover, New York, 1957. 

\bibitem{doha}
\textsc{E.H. Doha}, The first and second kind Chebyshev coefficients of the moments of the general-order derivative of an infinitely differentiable function, {\em Int. J. Comput. Math.} 51 (1994), 23-35.

\bibitem{drs}
\textsc{J. Dziok, R.K. Raina, J. Sok\'{o}l}, Applications of Chebyshev polynomials to classes of analytic functions, {\em C.R. Math. Acad. Sci. Paris}, 353(5) (2015), 433-438.

\bibitem{gs}
\textsc{U. Grenander, G. Szeg\"{o}}, {\em  Toeplitz Forms and Their Applications},California Monographs in Mathematical Sciences. Berkeley, CA, USA: University California Press, 1958.

\bibitem{keme} 
\textsc{F.R. Keogh, E.P. Merkes}, A coefficient inequality for certain classes of analytic functions, {\em Proc. Amer. Math. Soc.} 20 (1969), 8-12.

\bibitem{lrs}
\textsc{K. Lee, V. Ravichandran, S. Supramaniam}, Bounds for the second Hankel determinant of certain univalent functions, {\em J. Inequal. Appl.}, 281 (2013), 1-17.

\bibitem{mustafa}
\textsc{N. Mustafa}, Upper bound for the second Hankel determinant of certain subclass of analytic and bi-univalent functions, {\em arXiv-1702.06826}.

\bibitem{nt}
\textsc{J.W. Noonan, D.K. Thomas}, On the second Hankel determinant of a really mean p-valent functions, {\em Trans. Amer. Math. Soc.}  (1976) 223: 337-346.

\bibitem{omb}
\textsc{H. Orhan, N. Magesh, V.K. Balaji}, Fekete-Szeg\"{o} problem for certain classes of Ma-Minda bi-univalent functions, {\em Afr. Math.}, (2016) 27: 889-897.

\bibitem{omb1}
\textsc{H. Orhan, N. Magesh, V.K. Balaji}, Second Hankel determinant for certain class of bi-univalent functions defined by Chebyshev polynomials, {\em 	arXiv:1705.03313.}

\bibitem{omy}
\textsc{H. Orhan, N. Magesh, J. Yamini}, Bounds for the second Hankel determinant of certain bi-univalent functions, {\em Turkish J. Math.}, 40 (2016), no. 3, 679-687.

\bibitem{otk}
\textsc{H. Orhan, E. Toklu, E. Kad{\i}o{\u{g}}lu}, Second Hankel determinant problem for k-bi-starlike functions, {\em Filomat}, (accepted).

\bibitem{pommerenke}
\textsc{C. Pommerenke}, {\em Univalent Functions}. Gottingen, Germany: Vandenhoeck and Rupercht, 1975.
	
\bibitem{smg}
\textsc{H. M. Srivastava, A.K. Mishra, P. Gochhayat}, Certain subclasses of analytic and bi-univalent functions, {\em Appl. Math. Lett.} 23 (10) (2010), 1188-1192.

\bibitem{trimble}
\textsc{S.Y. Trimble}, A coefficient inequality for convex univalent functions, {\em Proc. Amer. Math. Soc.} Volume 48, no. 1 (1975).

\bibitem{ww}
\textsc{T. Whittaker, G.N. Watson}, {\em A course of modern analysis}, reprint of the fourth (1927) edition, Cambridge Mathematical Library, Cambridge Univ. Press, Cambvridge, 1996.	

	
\end{thebibliography}
\end{document}